\documentclass[11pt]{article}

\usepackage[a4paper, margin=2.5cm]{geometry}
\usepackage{amsmath, amssymb, amsthm}
\usepackage{mathtools}
\usepackage{bm}          
\usepackage{color}
\usepackage{graphicx}
\usepackage{caption}
\usepackage{hyperref}
\usepackage{enumitem}
\usepackage{booktabs}
\usepackage{multirow}
\usepackage{algorithm}
\usepackage{algpseudocode}
\usepackage{natbib}
\usepackage{microtype}
\usepackage{xcolor}
\usepackage{pgf}          
\usepackage{orcidlink}    

\newtheorem{theorem}{Theorem}[section]
\newtheorem{proposition}[theorem]{Proposition}

\newtheorem{remark}[theorem]{Remark}
\newtheorem{assumption}[theorem]{Assumption}

\newcommand{\R}{\mathbb{R}}
\newcommand{\E}{\mathbb{E}}
\newcommand{\bx}{\mathbf{x}}
\newcommand{\bk}{\mathbf{k}}
\newcommand{\bK}{\mathbf{K}}

\newcommand{\IK}{\mathcal{I}_{\bK}}

\newcommand{\Kmax}{K_{\max}}

\newcommand{\bmu}{\boldsymbol{\mu}}
\newcommand{\bomega}{\boldsymbol{\omega}}
\newcommand{\balpha}{\boldsymbol{\alpha}}

\newcommand{\calA}{\mathcal{A}}

\newcommand{\norm}[1]{\left\|#1\right\|}

\renewcommand{\Re}{\operatorname{Re}}
\newcommand{\TT}{\mathrm{TT}}
\newcommand{\CTT}{\mathrm{CTT}}

\begin{document}

\title{%
Fourier--cosine Tensor Trains for Density Recovery and Expectation Calculation}

\author{%
  Auke Schaap\thanks{Delft Institute of Applied Mathematics, Delft University of
  Technology, Delft, the Netherlands.}
  \and
  Fang Fang\,\orcidlink{0000-0003-2789-8255}\thanks{FF Quant Advisory B.V., Utrecht, the Netherlands, and Delft
  Institute of Applied Mathematics, Delft University of Technology, Delft, the
  Netherlands. Corresponding author:
  \href{mailto:f.fang@tudelft.nl}{f.fang@tudelft.nl}.}
}

\date{\today}

\maketitle

\begin{center}
  \small Preprint. Submitted to a journal; not peer-reviewed.
\end{center}

\bigskip
\noindent\textbf{Keywords:} tensor train decomposition,
Fourier--cosine tensor train, tensorized COS method, density recovery,
multi-asset option pricing, functional tensor train.

\begin{abstract}

The Fourier--cosine (COS) method of Fang and Oosterlee (2008) 
recovers densities and computes expectations semi-analytically from 
characteristic functions (ch.f.s). In higher dimensions, both expansion 
synthesis and COS coefficient-tensor construction scale exponentially with
 dimension. We address this curse of dimensionality with two tensor-train 
 (TT) methods. A straightforward TT decomposition of the COS coefficient 
 tensor gives COS-TT, which removes the exponential cost of online synthesis
  but not of offline decomposition. Our main contribution, COS-TT-CHF, 
  instead compresses a ch.f. sample tensor and maps it to the original 
  Fourier--cosine coefficient tensor through an alternative COS representation. 
  Apart from the model-dependent cost of one ch.f. evaluation, both offline
   and online costs scale linearly with dimension, while error grows only algebraically.
COS-TT-CHF introduces two additional errors: frequency-domain truncation
 and discretization errors. Both are controlled by parameter-selection rules 
 from theoretical error analysis. We further propose an approach for expectations
  of nonseparable functions of additive scalar aggregates by computing the ch.f.s 
  of the aggregates using tensorized COS methods, 
  reducing the original multidimensional problem to a one-dimensional COS calculation.
   Closed-form formulas are derived for the TT-contraction integrals arising
   when the aggregate is a weighted sum, as in European basket option pricing.
Experiments under geometric Brownian motion (GBM) and variance gamma (VG) show that,
with the parameter-selection rules, COS-TT-CHF remains 
accurate through 150 dimensions for GBM and 100 for VG within a 30-minute 
offline computation budget on a laptop. As a by-product, our Fourier--cosine
TT constructions yield cosine-basis functional TTs (FTTs) under weaker assumptions
 than in spectral FTT literature.

\end{abstract}

\section{Introduction}

\label{sec:intro}

Recovering joint densities and evaluating expectations of dependent random
variables are central to uncertainty quantification, reliability analysis,
statistical signal processing, stochastic modelling, and mathematical finance.
Direct tensor-product discretizations incur a computational cost that grows
exponentially with dimension for both tasks. In many models, including infinitely
divisible distributions and affine processes \citep{duffie2002affine}, the density
is unavailable in closed form, whereas the joint characteristic function (ch.f.)
is known analytically or can be evaluated inexpensively.

The Fourier--cosine (COS) method exploits this asymmetry. Introduced by
\citet{fang2009novel} for one-dimensional option pricing, it was later extended
and analysed in multidimensional settings
\citep{ruijter2012two,ruijter2015application,junike2025characteristic}.
The method recovers a density through a truncated cosine expansion whose
coefficients are directly linked to the ch.f., and in turn expresses
the expectation of a target function as the inner product of two sets 
of cosine coefficients: one associated withthe density and the other with
 the target function. 
In multiple dimensions, however, the COS formula faces
two compounding manifestations of the curse of dimensionality: the coefficient
tensor has exponentially many entries, and computing each entry requires summing
ch.f.\ values over exponentially many sign combinations.

Low-rank tensor representations offer a remedy, as explored by existing works
in literature (Section \ref{sec:related}). A companion paper by 
members of our group \citep{MastFang2026} develops a canonical-polyadic 
realization. Here we adopt
the tensor-train (TT) format \citep{oseledets2011tensor}, which represents a
multidimensional tensor as a chain of low-dimensional cores. We develop two
methods, COS-TT and COS-TT-CHF, which yield the same tensorized representation
of the Fourier--cosine expansion through different routes to the TT cores
but construct the TT cores through different routes.
We refer to them collectively as \emph{tensorized COS methods}.
COS-TT-CHF is our main innovation and contribution, while COS-TT serves as a 
simpler complementary method and benchmark.

\textbf{COS-TT} is a direct approach that applies TT decomposition to
compress the coefficient tensor of the multidimensional COS formula. We
refer to this tensor as the \emph{COS coefficient tensor} to distinguish
it from the Fourier--cosine coefficient tensor that it approximates.
Compression removes the curse of dimensionality from expansion synthesis,
enabling efficient online density recovery and expectation computation.
Offline construction, however, still requires access to individual tensor
entries, each of which involves exponentially many ch.f.\ evaluations. We
nevertheless provide a full derivation of COS-TT: it remains useful in
moderate dimensions, serves as a benchmark, is simpler to implement, and
inherits the standard COS error control established in the literature.

\textbf{COS-TT-CHF}, our main innovation and contribution, instead first
compresses a tensor of ch.f.\ samples, each requiring only a single ch.f.
evaluation, and then applies a rank-preserving transformation to obtain
the Fourier--cosine coefficient cores. With all other discretization and
rank parameters fixed, both TT construction and online computation scale
linearly with dimension, apart from the model-dependent cost of each
ch.f.\ evaluation. The method introduces two additional errors:
frequency-domain truncation error and discretization error, controlled by
the truncation range and the number of quadrature points, respectively.
Our error analysis yields parameter-selection rules under which
COS-TT-CHF achieves accuracy comparable to COS-TT. For both methods, we
also prove that density recovery does not amplify coefficient-level
approximation error.

Beyond density recovery, we develop an approach for evaluating
expectations of nonseparable functions of additive scalar aggregates.
Existing approaches apply TT decompositions to both the target function
and the density. We instead contract the tensorized COS representation of
the density to obtain the ch.f.\ of the aggregate, reducing the
multidimensional problem to a one-dimensional one to which the standard
one-dimensional COS method can be applied. For weighted-sum aggregates,
as in European basket option pricing, closed-form expressions for the
contraction integrals yield a semi-analytical formula.

Experiments under geometric Brownian motion (GBM) and variance gamma (VG)
models demonstrate algebraic growth of the error with dimension. With the
proposed parameter-selection rules, COS-TT-CHF remains accurate up to
dimension $150$ for GBM and $100$ for VG within a $30$-minute offline
computation budget on a laptop. It thus effectively breaks the curse of
dimensionality for the density-recovery and expectation-computation
problems considered here. More broadly, COS-TT-CHF provides a route to
extending Fourier-based methods to problems previously limited by high
dimensionality.

As a by-product, our derivations provide an alternative route to
cosine-basis functional tensor trains (FTTs) under weaker assumptions than
existing spectral derivations, as discussed in
Section~\ref{subsec:duality}. Section~\ref{sec:related} relates these
contributions to the existing literature.

\section{Related work}
\label{sec:related}

Low-rank tensor methods provide powerful tools for high-dimensional
approximation, integration, and option pricing
\citep{DolgovSavostyanov2019,glau2020low,kastoryano2022highly,
sakurai2025learning}. Their relevance to the present work turns on two
distinctions: assumptions and methodology. In this paper, the ch.f.\ is
known but the density need not be, and for expectation calculations we
compress only the density, not the target function.

\paragraph{Tensor integration and parametric pricing.}

\citet{DolgovSavostyanov2019} develop parallel TT-cross interpolation
(Section~\ref{subsec:TT-algorithms}) for high-dimensional integration.
The method requires pointwise evaluations of the integrand, which it
samples adaptively rather than assembling on a full tensor-product grid.
Applied directly to an expectation written as an integral of a target
function against a density, it therefore requires pointwise access to the
density. The integration algorithm can equally be used with other
evaluable representations of the integrand, including suitable Fourier
formulations, but it does not itself provide the ch.f.-to-density
construction studied here.

\citet{glau2020low} combine tensor approximation with Chebyshev
interpolation for parametric option pricing. They interpolate prices in
model and payoff parameters on a tensorized Chebyshev grid and recover
the coefficients by TT completion, with each tensor entry produced by a
reference pricer. Their tensor is indexed by model and payoff parameters
rather than by the underlying risk factors, so it compresses the
parameter dependence of a price rather than the expansion of a density.
It therefore serves as an acceleration method for pricing individual
products across parameter configurations. The two constructions address
different stages of the computation and may be complementary.

\paragraph{Tensor pricing from ch.f.\ samples.}

The approaches most closely related to ours are those of
\citet{kastoryano2022highly} and \citet{sakurai2025learning}, which also work
with ch.f.\ samples without requiring pointwise density evaluations. Their
Fourier pricing formulas additionally require evaluable payoff transforms
and suitable complex shifts of the integration contour.

\citet{kastoryano2022highly} express the discretized Fourier price as an
inner product of sampled ch.f.\ and payoff-transform values, approximating
both tensors in TT format by cross interpolation. For their minimum-of-assets
payoff, the transform couples the frequencies through their sum and cannot
be written as a single separable product; the shifted integration contour
avoids its poles. In their main benchmark, the chosen payoff bond
dimensions are twice those of the ch.f.\ tensor. Their separate three-asset
study  fixes the payoff bond dimension at $30$ to make its
approximation error negligible and isolate the error from the ch.f.\ tensor.
They report results for up to $15$ assets and encounter TT-cross convergence
difficulties beyond this dimension. 

\citet{sakurai2025learning} extend Fourier--TT pricing to support repeated
pricing as model parameters vary. In their construction, the ch.f.\ tensor depends
jointly on frequency and parameters, while the payoff-transform tensor depends
only on frequency. Offline contraction over the frequency indices, followed
by SVD compression, produces a TT representation of the price over the
parameter grid, enabling rapid online evaluation. Their min-call experiments vary either
volatilities or initial asset prices under the Black--Scholes model and reach
$11$ assets. This representation can be reused across model configurations,
although incorporating parameter dependence may increase the approximation
demands relative to a fixed-model calculation. 

Our approach to expectation calculation avoids tensorizing the payoff
altogether. For targets of the form $G(H)$, where $H$ is an additive
scalar aggregate of coordinate-wise functions, the exponential appearing
in the ch.f.\ of $H$ factorizes across coordinates. Substituting a
tensorized COS approximation of the joint density into the integral
defining this ch.f.\ reduces its evaluation to efficient TT contractions
involving one-dimensional integrals. Once the ch.f.\ of $H$ is obtained,
the original multidimensional expectation reduces to a one-dimensional
expectation of $G(H)$, to which the standard one-dimensional COS method
can be applied. When the aggregate is a weighted sum, as in European
basket option pricing, Theorem~\ref{thm:inner-integrals} provides
closed-form expressions for the contraction integrals. This reduction
applies to the stated class of additive aggregates; we do not claim an
equivalent reduction for arbitrary multivariate targets.

\paragraph{Spectral tensor representations and the COS construction.}

The low-rank spectral representations closest in form to our construction
are FTTs \citep{bigoni2016spectral} and CPD-based Fourier or cosine
constructions \citep{Kargas2021,amiridi2022lowrank}. As discussed in
Section~\ref{subsec:duality}, our derivations yield a cosine-basis FTT
through an alternative construction that requires only the ch.f.\ and
converges to the density under square integrability alone. The existing
spectral FTT derivation, by comparison, assumes H"older continuity and
Sobolev regularity.

CPD can be less accurate in practice \citep{MastFang2026}, and for tensors
of order at least three, sets of bounded CP rank are generally nonclosed,
so a best fixed-rank CP approximation need not exist
\citep{de2008tensor}. The companion work \citep{MastFang2026} develops a
CP realization of the tensorized COS principle for credit-exposure
calculations. Although the resulting representation is fully separable,
the offline decomposition becomes a bottleneck beyond $20$ dimensions.
Here, we develop the TT realization and introduce COS-TT-CHF to overcome
this offline bottleneck.

\paragraph{Relationship to earlier work.}

Both COS-TT and COS-TT-CHF were first derived by the authors of the
present paper and documented in an MSc thesis \citep{schaap2025costt}.
The thesis timeframe did not permit a comprehensive study, particularly
of COS-TT-CHF, which was developed near the end of the thesis period.
The methodology presented here is unchanged from those earlier
derivations; the contribution of the present paper is to develop them
into a complete framework, including systematic derivations, extensive
error analysis, theoretical proofs, practical parameter-selection rules,
and comprehensive numerical tests in substantially higher dimensions.

While this paper was being prepared, \citet{arenstein2026costtchf}
independently reimplemented COS-TT-CHF for option pricing, following the
construction documented in our thesis without methodological changes.
We refer to their work for complementary option-pricing benchmarks of
COS-TT-CHF and COS-TT.

\section{Background and notation}
\label{sec:prelim}

\subsection{The COS method}
\label{subsec:COS}

Let $f$ be a continuous probability density on $\R$. It is integrable, so its
Fourier transform, the ch.f., is well defined and continuous:
\[
    \varphi(\omega)
    =
    \int_{\R} f(x)e^{i\omega x}\,dx .
\]
Assume also that $f$ is piecewise $C^1$ on
$[a,b]\subset\R$. Then its Fourier--cosine series converges uniformly,
and hence pointwise, to $f$ on $[a,b]$:
\begin{equation}
    \label{eq:cosine-series-1d}
    f(x)
    =
    \sum_{k=0}^{\infty}{}'
    F_k
    \cos\!\left(
        \frac{k\pi}{b-a}(x-a)
    \right),
    \qquad
    F_k
    =
    \frac{2}{b-a}
    \int_a^b
    f(x)
    \cos\!\left(
        \frac{k\pi}{b-a}(x-a)
    \right)dx ,
\end{equation}
where $\sum'$ indicates that the term with $k=0$ is multiplied by one half.
The COS method truncates the series after $K$ terms and approximates the
coefficients directly from the ch.f.:
\begin{equation}
    \boxed{
    \label{eq:COS-coeff}
    F_k
    \approx
    A_k
    :=
    \frac{2}{b-a}
    \Re\!\left\{
        \varphi\!\left(\frac{k\pi}{b-a}\right)
        \exp\!\left(
            -i\frac{k\pi a}{b-a}
        \right)
    \right\}.}
\end{equation}
The two steps together give the COS formula for density recovery,
\begin{equation}
    \label{eq:COS-approx-1d}
    f(x)
    \approx
    \hat{f}(x)
    :=
    \sum_{k=0}^{K-1}{}'
    A_k
    \cos\!\left(
        \frac{k\pi}{b-a}(x-a)
    \right).
\end{equation}
This construction extends to multiple dimensions.
Let $\mathbf{D}$ denote a finite hyperrectangle
\begin{equation}
    \label{eq:domain-D}
    \mathbf{D}
    :=
    D_1\times\cdots\times D_d
    \subset\R^d,
    \qquad
    D_j:=[l_j,u_j].
\end{equation}
Let $\mathbf X=(X_1,\ldots,X_d)$ have a continuous density $f_{\mathbf X}$ on
$\R^d$ whose restriction to $\mathbf D$ belongs to $L^2(\mathbf D)$. Its joint
ch.f.\ is
\[
    \varphi_{\mathbf X}(\bomega)
    =
    \int_{\R^d}f_{\mathbf X}(\bx)e^{i\bomega\cdot\bx}\,d\bx.
\]

\begin{remark}[Truncation range in the physical domain]
\label{rem:domain-rule}
For the physical-domain truncation we follow the cumulant-based rule of
\citet{fang2009novel}, applied coordinatewise:
\begin{equation}
    \label{eq:domain-X}
    l_j
    =
    c_{1,j} - \lambda_1\sqrt{c_{2,j}+\sqrt{c_{4,j}}},
    \qquad
    u_j
    =
    c_{1,j} + \lambda_1\sqrt{c_{2,j}+\sqrt{c_{4,j}}},
\end{equation}
where $c_{n,j}$ is the $n$-th cumulant of $X_j$ and $\lambda_1>0$ is a
dimensionless multiplier, selected in Section~\ref{subsec:num-K-rank}. The
range is therefore centred at the marginal mean
$c_{1,j}$, with half-width $\ell_j = \lambda_1\sqrt{c_{2,j}+\sqrt{c_{4,j}}}$
set by the marginal standard deviation $\sqrt{c_{2,j}}$ and corrected for the
fourth cumulant, so that little of the mass of $f_{\mathbf X}$ falls outside
$\mathbf{D}$.
We write $\lambda_1$ rather than the $L$ of \citet{fang2009novel} to avoid a
clash with the $L^1$ and $L^2$ norms used throughout.
\end{remark}

\begin{remark}[Norms and types of convergence]
\label{rem:norms}
The one- and multidimensional statements use different function spaces. In one
dimension, we assume $f\in L^1(\R)$ and piecewise $C^1$, following
\citet{fang2009novel}, so that \eqref{eq:cosine-series-1d} converges uniformly.
In $d$ dimensions, we assume only $f_{\mathbf X}\in L^2(\mathbf D)$ and
interpret the expansion in $L^2$. The cosine basis is orthonormal in this space,
Parseval and Plancherel convert coefficient errors into density errors, and the
TT format is defined on
$L^2(\mathbf D)=L^2(D_1)\otimes\cdots\otimes L^2(D_d)$. Accordingly, all density
errors below are measured in $L^2(\mathbf D)$. Section~\ref{subsec:COS-TT-CHF}
additionally assumes $f_{\mathbf X},\varphi_{\mathbf X}\in L^1(\R^d)$ to justify
Fourier inversion and the interchange of integrals. Convergence rates require stronger assumptions,
namely polynomial ch.f.\ decay of order $p>d/2$ and COS-admissibility; see
\citet[Thm.~3.7 and Prop.~A.1]{junike2025characteristic}.
\end{remark}

Throughout the paper, $\Phi_{\bk}(\bx)$ denotes the product of univariate
cosine basis functions:
\begin{equation}
    \label{eq:product-basis}
    \Phi_{\bk}(\bx)
    =
    \prod_{j=1}^{d}
    \phi^{(j)}_{k_j}(x_j),
    \qquad
    \phi^{(j)}_{k_j}(x_j)
    =
    \beta^{(j)}_{k_j}
    \cos\!\left(
        \frac{k_j\pi}{u_j-l_j}(x_j-l_j)
    \right),
\end{equation}
where
$\beta^{(j)}_{k_j}=\sqrt{1/(u_j-l_j)}$ for $k_j=0$ and
$\beta^{(j)}_{k_j}=\sqrt{2/(u_j-l_j)}$ for $k_j\geq 1$.
The normalization absorbs the one-half factor of the primed sum in
\eqref{eq:cosine-series-1d}, so ordinary sums are used from here on. It also
makes $\{\Phi_{\bk}\}_{\bk\in\mathbb N_0^d}$ a complete orthonormal basis of
$L^2(\mathbf D)$, so every $f_{\mathbf X}\in L^2(\mathbf D)$ admits a
Fourier--cosine expansion in $L^2$:
\begin{equation}
    \label{eq:multD_Fourier_cosine}
    \lim_{\bK\to\infty}
    \norm{
        f_{\mathbf X}(\bx)
        -
        \tilde{f}_{\bK}(\bx)
    }_{L^2(\mathbf D)}
    =
    0,
    \qquad
    \tilde{f}_{\bK}(\bx)
    :=
    \sum_{\bk}^{\bK}
    \mathcal{F}_{\bk}\Phi_{\bk}(\bx).
\end{equation}
Here $\bk=(k_1,\ldots,k_d)\in\mathbb N_0^d$ is the multi-index of expansion
indices, $\bK=(K_1,\ldots,K_d)$ collects the term counts, $\bK\to\infty$ means
$K_j\to\infty$ for every $j$, and
\[
    \sum_{\bk}^{\bK}
    :=
    \sum_{k_1=0}^{K_1-1}
    \cdots
    \sum_{k_d=0}^{K_d-1}.
\]
The Fourier--cosine coefficient tensor $\mathcal{F}$ in \eqref{eq:multD_Fourier_cosine}
is the $d$-dimensional counterpart of $F_k$ in
\eqref{eq:cosine-series-1d}. Its entries are defined by
\begin{equation}
    \label{eq:F-tensor}
    \mathcal{F}_{\bk}
    :=
    \left\langle
        f_{\mathbf X},
        \Phi_{\bk}
    \right\rangle_{L^2(\mathbf{D})}
    =
    \int_{\mathbf{D}}
    f_{\mathbf X}(\bx)
    \Phi_{\bk}(\bx)\,d\bx.
\end{equation}

The multidimensional counterpart of $A_k$ in \eqref{eq:COS-coeff} is the COS
coefficient tensor $\mathcal{A}$, whose entries are computed directly from
ch.f.\ values.
Elementwise, $\mathcal{A}$ is defined by
\begin{equation}
    \label{eq:multi-COS-coeff}
    \mathcal{A}_{\bk}
    =
    \frac{
        \prod_{j=1}^{d}\beta^{(j)}_{k_j}
    }{2^{d-1}}
    \sum_{\mathbf{s}\in\mathcal{S}_d}
    \Re\!\left\{
        \exp\!\left(
            -i
            \sum_{j=1}^{d}
            \frac{s_jk_j\pi l_j}{u_j-l_j}
        \right)
        \varphi_{\mathbf{X}}\!\left(
            \frac{s_1k_1\pi}{u_1-l_1},
            \ldots,
            \frac{s_dk_d\pi}{u_d-l_d}
        \right)
    \right\},
\end{equation}
where
$\mathcal{S}_d=\{(1,s_2,\ldots,s_d):s_i\in\{-1,+1\}\}$
is the set of sign vectors of length $d$ whose first entry is fixed to
$+1$, so that $|\mathcal{S}_d|=2^{d-1}$. The sum over
$\mathcal{S}_d$ replaces the single real part in the one-dimensional
formula \eqref{eq:COS-coeff}: the multidimensional Fourier transform
produces a cosine of a signed sum, whereas $\Phi_{\bk}$ is a product
of univariate cosines.
Replacing $\mathcal{F}$ by $\mathcal{A}$ gives the multidimensional COS
approximation of a density from its ch.f.:
\begin{equation}
    \label{eq:fhat}
    \tilde{f}_{\bK}(\bx)
    \approx
    \hat{f}_{\bK}(\bx)
    :=
    \sum_{\bk}^{\bK}
    \mathcal{A}_{\bk}\Phi_{\bk}(\bx),
    \qquad
    \bx\in\mathbf{D}.
\end{equation}
The curse of dimensionality in evaluating \eqref{eq:fhat} arises in two places.
First, the COS coefficient tensor contains
\(
    \prod_{j=1}^{d}K_j
\)
entries, so directly contracting it with the tensor product of cosine
basis functions requires work proportional to $\prod_{j=1}^{d}K_j$.
In the isotropic case, $K_j=K$, this gives $\mathcal{O}(K^d)$
complexity. Second, each entry of $\mathcal{A}$ combines $2^{d-1}$ sign patterns
of the ch.f.\ and therefore costs $\mathcal{O}(2^{d-1})$ to evaluate. Forming
the entire tensor requires
\(
    \mathcal{O}\!\left(
        2^{d-1}\prod_{j=1}^{d}K_j
    \right),
\)
or $\mathcal{O}(2^{d-1}K^d)$ in the isotropic case.

\paragraph{Notation simplification.} Henceforth, the expansion-term count $\bK$ is fixed,
so we omit the subscript $\bK$. We also abbreviate $f_{\mathbf X}$ by $f$,
$\varphi_{\mathbf X}$ by $\varphi$, and write
$\tilde{f}$ and $\hat{f}$ for $\tilde{f}_{\bK}$ and
$\hat{f}_{\bK}$ in \eqref{eq:multD_Fourier_cosine} and
\eqref{eq:fhat}.

\subsection{TT decomposition}
\label{subsec:formats}
\label{subsec:TT}
\label{subsec:TT-algorithms}

We use the TT format of \citet{oseledets2011tensor}.

Let $\mathbb F$ denote either $\R$ or $\mathbb C$. An order-$d$ tensor
$\mathcal{T}\in\mathbb F^{n_1\times\cdots\times n_d}$ admits a TT representation if
\begin{equation}
    \label{eq:TT-discrete}
    \mathcal{T}[i_1,i_2,\ldots,i_d]
    =
    \mathbf{T}_1(i_1)\mathbf{T}_2(i_2)\cdots\mathbf{T}_d(i_d),
    \qquad
    i_j=1,\ldots,n_j,
\end{equation}
where
$\mathbf{T}_j(i_j)\in\mathbb F^{r_{j-1}\times r_j}$
is the $i_j$-th matrix slice of a three-dimensional core
$T_j\in\mathbb F^{r_{j-1}\times n_j\times r_j}$ and
$r_0=r_d=1$.
Here $n_j$ is the coordinate size, $i_j$ the corresponding index, and
$\mathbf{r}=(r_1,\ldots,r_{d-1})$ the TT ranks. In Section~\ref{sec:method},
$n_j=K_j$ for the COS coefficient tensor in COS-TT and $n_j=M_j$ for the
sampled ch.f.\ tensor in COS-TT-CHF.

Two properties make the format useful here. First, if
$n=\max_j n_j$ and $r=\max_j r_j$,
storage scales as
$\mathcal{O}(dnr^2)$,
compared with $\mathcal{O}(n^d)$ for the full tensor. Sets of tensors of bounded
TT rank are closed, so best fixed-rank approximations exist
\citep[Cor.~2.4]{oseledets2011tensor}.
Second, contracting a TT tensor with a separable (rank-one) tensor factorizes
across dimensions at cost $\mathcal{O}(dnr^2)$, hence linearly in the number of
dimensions for bounded coordinate size and TT rank. The later derivations use
the functional counterpart, Theorem~\ref{thm:FTT-inner-product}.

\paragraph{Decomposition algorithms.}
The methods in Section~\ref{sec:method} depend only on the TT representation
and apply to any algorithm that computes the cores. The choice of
algorithm affects performance and contributes an approximation error, but it
does not alter the tensorized COS formula.

The classical TT-SVD algorithm \citep{oseledets2011tensor} performs successive
singular value decompositions of tensor unfoldings. It is deterministic and
quasi-optimal, but it requires access to the full tensor, which rules it out
here.

Cross approximation \citep{OseledetsTyrtyshnikov2010} instead builds the TT
representation from selected tensor entries, so the full tensor is never
formed. It rests on the pseudo-skeleton principle from matrix approximation and
chooses interpolation sets from tensor fibres. TT-Cross selects these sets by a
maximum-volume principle, whose quasi-optimality factor does not grow
exponentially with $d$ \citep{Savostyanov2014,qin2022error}. Since computing
maximum-volume submatrices is NP-hard, this is an existence result only, and
practical algorithms carry no comparable guarantee.

The density-matrix renormalization group (DMRG) strategy treats two
neighbouring dimensions together as a superblock, determines an intermediate rank,
and then splits the superblock into two TT cores while sweeping through the
train
\citep{white1992density,schollwock2011density}. The DMRG-greedy variant
\citep{Savostyanov2014} combines this rank-adaptive construction with greedy
interpolation, so the ranks need not be prescribed in advance.

\paragraph{The algorithm used in this paper.}
We use TT-DMRG-greedy, a cross-interpolation method, because the tensors are not
available in full and their ranks are unknown.
It requires $\mathcal{O}(dnr^2)$ tensor-entry evaluations and
$\mathcal{O}(dnr^3)$ additional algebraic operations
\citep{DolgovSavostyanov2019}, where
$d$ denotes the number of dimensions, $n$ a representative coordinate size,
and $r$ a representative TT rank. Hence, for bounded $n$ and $r$,
the complexity of the TT construction is linear in the dimension.
This estimate concerns the tensor algorithm itself; if the
cost of evaluating a single tensor entry grows with $d$, this cost
must be included separately in the overall computational complexity.

\subsection{Functional tensor trains (FTTs)}
\label{subsec:FTT}

The continuous analogue of a TT, for functions in tensor-product Hilbert spaces
\citep{hackbusch2009new,hackbusch2019tensor}, is a FTT
\citep{bigoni2016spectral,gorodetsky2019continuous}.

A rank-$\mathbf r=(r_1,\ldots,r_{d-1})$ FTT of
$g\in L^2(\mathbf D)$ is
\begin{equation}
\label{eq:FTT-rank-r}
    g_{\mathbf r\text{-TT}}(\bx)
    =
    \sum_{\balpha}^{\mathbf r}
    \prod_{j=1}^d
    B_j(
        \alpha_{j-1};
        x_j;
        \alpha_j
    )
    =
    \mathbf B_1(x_1)\cdots\mathbf B_d(x_d),
\end{equation}
where $\alpha_0=\alpha_d=1$ and
\[
    B_j(
        \alpha_{j-1};
        \cdot;
        \alpha_j
    )
    \in L^2(D_j).
\]
Here
\[
    \mathbf B_j(x_j)
    :=
    \left[
        B_j(\alpha_{j-1};x_j;\alpha_j)
    \right]_{\alpha_{j-1},\alpha_j},
    \qquad
    \sum_{\balpha}^{\mathbf r}
    :=
    \sum_{\alpha_1=1}^{r_1}\cdots
    \sum_{\alpha_{d-1}=1}^{r_{d-1}}.
\]
A key property used in the later derivations is that contraction against a
separable function factorizes across dimensions.

\begin{theorem}[Separable contraction]
\label{thm:FTT-inner-product}
Let $g_{\mathbf r\text{-TT}}$ be given by
\eqref{eq:FTT-rank-r}, and let
\[
    v(\bx)
    =
    \prod_{j=1}^d v_j(x_j),
    \qquad
    v_j\in L^2(D_j).
\]
Then, whenever the integrals exist,
\begin{equation}
\label{eq:FTT-inner-product}
    \int_{\mathbf D}
    g_{\mathbf r\text{-TT}}(\bx)
    v(\bx)\,d\bx
    =
    \prod_{j=1}^d
    \left[
        \int_{D_j}
        \mathbf B_j(x_j)
        v_j(x_j)\,dx_j
    \right],
\end{equation}
where the right-hand side is an ordered
matrix product.
\end{theorem}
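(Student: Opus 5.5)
The plan is to expand both sides into finite sums of products of one-dimensional integrals and match them term by term. Since the FTT is a finite sum of separable terms, the whole argument is linearity of the integral plus Fubini--Tonelli on the rectangle $\mathbf D$.

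\emph{Step 1: expand the left-hand side.} Substitute \eqref{eq:FTT-rank-r} into the integrand to obtain
\[
    g_{\mathbf r\text{-TT}}(\bx)\,v(\bx)
    =
    \sum_{\balpha}^{\mathbf r}
    \prod_{j=1}^d
    B_j(\alpha_{j-1};x_j;\alpha_j)\,v_j(x_j),
\]
with $\alpha_0=\alpha_d=1$. This is a finite sum, so the integral over $\mathbf D$ can be taken term by term.

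\emph{Step 2: factorize each term.} Each summand is a product of functions of single coordinates. On the rectangle $\mathbf D=D_1\times\cdots\times D_d$, Fubini--Tonelli then splits its integral into
\[
    \prod_{j=1}^d \int_{D_j} B_j(\alpha_{j-1};x_j;\alpha_j)\,v_j(x_j)\,dx_j .
\]

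\emph{Step 3: identify the matrix product.} Define the scalars $C_j(\alpha_{j-1},\alpha_j):=\int_{D_j}B_j(\alpha_{j-1};x_j;\alpha_j)v_j(x_j)\,dx_j$. These are exactly the entries of the matrix $\int_{D_j}\mathbf B_j(x_j)v_j(x_j)\,dx_j$, since the integral of a matrix-valued function is taken entrywise. The sum $\sum_{\balpha}^{\mathbf r}\prod_j C_j(\alpha_{j-1},\alpha_j)$ is, by the definition of matrix multiplication, the $(1,1)$ entry of the ordered product $\mathbf C_1\cdots\mathbf C_d$. Because $r_0=r_d=1$, this product is a $1\times 1$ matrix, which gives \eqref{eq:FTT-inner-product}.

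\emph{Main obstacle: justifying Fubini in Step 2.} Each factor $B_j(\alpha_{j-1};\cdot;\alpha_j)v_j$ is a product of two $L^2(D_j)$ functions. By Cauchy--Schwarz it is therefore in $L^1(D_j)$. The product of $d$ such one-variable $L^1$ functions is in $L^1(\mathbf D)$, since Tonelli bounds the integral of its absolute value by the product of the one-dimensional $L^1$ norms. Fubini therefore applies to each summand, and the integrals exist automatically under the stated hypotheses. The clause ``whenever the integrals exist'' only matters if one relaxes the $L^2$ assumptions, and in that case one would assume absolute integrability of each separable term directly.
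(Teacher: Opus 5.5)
Your proof is correct and follows essentially the same route as the paper's. Both arguments expand the FTT into finitely many separable summands, use Cauchy--Schwarz to place each univariate factor $B_j(\alpha_{j-1};\cdot;\alpha_j)v_j$ in $L^1(D_j)$ so that Fubini applies to each summand separately, and reassemble the sum over $\balpha$ as the ordered matrix product.
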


\begin{proof}
Each summand of \eqref{eq:FTT-rank-r} multiplied by
$v$ is a product of univariate factors
$B_j(\alpha_{j-1};\cdot;\alpha_j)v_j$, each in
$L^1(D_j)$ by Cauchy--Schwarz, so the integrand lies
in $L^1(\mathbf D)$ and Fubini's theorem applies to the
finitely many summands separately.
Integrating each summand coordinatewise and
reassembling the sum over $\balpha$ as an ordered
matrix product gives
\eqref{eq:FTT-inner-product}.
\end{proof}

The proof does not require $\mathbf D$ to be bounded, so the theorem holds on
any product $D_1\times\cdots\times D_d$ of possibly unbounded intervals;
Section~\ref{subsec:COS-TT-CHF} applies it to the frequency variable on $\R^d$.

\section{Tensorized COS methods using TT}
\label{sec:method}

We derive COS-TT first: it establishes the tensorized representation and
notation that both methods share.

\subsection{COS-TT: via TT decomposition of the COS coefficient tensor}
\label{subsec:COS-TT}
COS-TT compresses the COS coefficient tensor $\mathcal{A}$ of the
multidimensional COS approximation \eqref{eq:fhat}: the COS formula relates the
ch.f.\ to the coefficients, and the TT step then acts only on that finite array.

Suppose that the COS coefficient tensor $\mathcal{A}$ has a rank-$\mathbf{r}$ TT
approximation $\hat{\mathcal{A}}$:
\begin{equation}
    \label{eq:A-TT-discrete}
    \hat{\mathcal{A}}[k_1,\ldots,k_d]
    :=
    \hat{\mathbf{A}}_1(k_1)
    \hat{\mathbf{A}}_2(k_2)
    \cdots
    \hat{\mathbf{A}}_d(k_d),
\end{equation}
where
\[
    \hat{\mathbf{A}}_j(k_j)
    \in
    \R^{r_{j-1}\times r_j},
    \quad
    r_0=r_d=1.\]
$\hat{\mathbf{A}}_j(k_j)$ is the $k_j$-th matrix slice of the $j$-th TT core,
and $\mathbf{r}=(r_1,\ldots,r_{d-1})$ contains the TT ranks.
Substituting this into the COS expansion and using
$\Phi_{\bk}(\bx)=\prod_j\phi^{(j)}_{k_j}(x_j)$ of \eqref{eq:product-basis} gives
\begin{equation}
    \hat{f}(\bx)
\approx \hat{f}_{\mathbf{r}\text{-CTT}}(\bx)
:=
    \sum_{\bk}^{\bK}
    \left(
        \prod_{j=1}^{d}
        \hat{\mathbf{A}}_j(k_j)
    \right)
    \prod_{j=1}^{d}
    \phi^{(j)}_{k_j}(x_j)
    =
    \prod_{j=1}^{d}
    \left(
        \sum_{k_j=0}^{K_j-1}
        \hat{\mathbf{A}}_j(k_j)
        \phi^{(j)}_{k_j}(x_j)
    \right),
    \label{eq:COS-TT-discrete}
\end{equation}
where each summation index occurs in only one matrix factor of the ordered
product, and the scalar basis functions commute with the matrices.
Equivalently,
\begin{equation}
    \label{eq:discrete-core-function}
    \hat{f}_{\mathbf{r}\text{-CTT}}(\bx)
    =\prod_{j=1}^{d} \hat{\bm{\Gamma}}^{C}_j(x_j),
    \quad\text{with}\quad
    \hat{\bm{\Gamma}}^{C}_j(x_j)
    :=
    \sum_{k_j=0}^{K_j-1}
    \hat{\mathbf{A}}_j(k_j)
    \phi^{(j)}_{k_j}(x_j)
    \in
    \R^{r_{j-1}\times r_j}.
\end{equation}
A TT decomposition therefore expresses the COS approximation as a chain of
univariate matrix-valued cosine expansions $\hat{\bm{\Gamma}}^{C}_j(x_j)$.
The superscript $C$ indicates that the compressed object is the COS coefficient
tensor and distinguishes these cores from the ch.f.-based cores introduced in
Section~\ref{subsec:COS-TT-CHF}. Equation~\eqref{eq:discrete-core-function} is
an FTT representation of the COS-TT density
$\hat f_{\mathbf r\text{-CTT}}$. Section~\ref{subsec:duality} discusses this
connection.

For the subsequent derivations, we display the auxiliary indices of each matrix
slice, writing
$[\hat{\mathbf{A}}_j(k_j)]_{\alpha_{j-1},\alpha_j}
= \hat{A}_j(\alpha_{j-1},k_j,\alpha_j)$, so that
$\hat{A}_j\in\R^{r_{j-1}\times K_j\times r_j}$. The matrix products of
\eqref{eq:A-TT-discrete} become sums over the auxiliary indices:
\begin{equation}
    \label{eq:A-TT}
    \hat{\mathcal{A}}_{\bk}
    =
    \sum_{\balpha}^{\mathbf r}
    \prod_{j=1}^{d}
    \hat{A}_j(
        \alpha_{j-1},
        k_j,
        \alpha_j
    ),
    \quad\text{with}\quad
    \alpha_0=\alpha_d=1,
\end{equation}
where the auxiliary-index summation is defined in Section~\ref{subsec:FTT}.
The same convention applies to the functional cores,
$[\hat{\bm{\Gamma}}^{C}_j(x_j)]_{\alpha_{j-1},\alpha_j}=
\hat{\Gamma}^{C}_j(\alpha_{j-1};x_j;\alpha_j)$. The corresponding COS-TT
approximation is \eqref{eq:COS-TT-discrete} in entry notation:
\begin{equation}
    \label{eq:COS-TT-approx}
    \boxed{
    \hat{f}_{\mathbf{r}\text{-CTT}}(\bx)
    :=
    \sum_{\balpha}^{\mathbf r}
    \prod_{j=1}^{d}
    \hat{\Gamma}^{C}_j(
        \alpha_{j-1};
        x_j;
        \alpha_j
    ),
    }
\end{equation}
with
\begin{equation}
    \label{eq:functional-core}
    \hat{\Gamma}^{C}_j(
        \alpha_{j-1};
        x_j;
        \alpha_j
    )
    :=
    \sum_{k_j=0}^{K_j-1}
    \hat{A}_j(
        \alpha_{j-1},
        k_j,
        \alpha_j
    )
    \phi^{(j)}_{k_j}(x_j).
\end{equation}

\paragraph{Summary.} The COS-TT method is the COS approximation
\eqref{eq:fhat} with $\mathcal{A}$ replaced by its TT approximation
$\hat{\mathcal{A}}$, written in low-rank separable form.
The approximation chain from $f$ to the COS-TT density
$\hat{f}_{\mathbf{r}\text{-CTT}}$ has three components:
\begin{equation}
    \label{eq:approx-chain}
    f
    \;\xrightarrow[\;\varepsilon_{\bK}\;]{\;\text{Fourier--cosine truncation}\;}
    \tilde{f}
    \;\xrightarrow[\;\varepsilon_{\mathrm{COS}}\;]{\;\mathcal{F}\to\mathcal{A}\;}
    \hat{f}
    \;\xrightarrow[\;\varepsilon_{\CTT}\;]{\;\text{TT decomposition}\;}
    \hat{f}_{\mathbf{r}\text{-CTT}},
\end{equation}
on the truncation hyperrectangle $\mathbf{D}$ defined in
\eqref{eq:domain-D}.
The three arrows indicate three error components, which add through the triangle
inequality. Appendix~\ref{sec:error} quotes bounds on the first two from the COS
literature and assumes a tolerance on the third.

Once the TT cores of $\mathcal{A}$ are available, density recovery has complexity
$\mathcal{O}(d\Kmax r^2)$, with $\Kmax=\max_j K_j$ and $r=\max_j r_j$.
TT-cross avoids forming the full tensor, but each requested entry of
$\mathcal{A}$ still costs $\mathcal{O}(2^{d-1})$ ch.f.\ evaluations. The offline
stage therefore remains subject to the curse of dimensionality.

\subsection{COS-TT-CHF: via TT decomposition of the ch.f.\ sample tensor}
\label{subsec:COS-TT-CHF}

Instead of approximating the Fourier--cosine coefficients through the COS
coefficient tensor $\mathcal A$, COS-TT-CHF uses the exact relation between the
ch.f.\ and the original coefficients derived below. We call this the
\emph{alternative COS representation}.

\subsubsection{The alternative COS representation}
We first derive the relation in one dimension. Let
$f, \varphi\in L^1(\R)$ form a continuous Fourier-transform pair:
\[
    \varphi(\omega)
    =
    \int_{\R}
    f(x)e^{i\omega x}\,dx
    \quad \textrm{and} \quad     
    f(x)
    =
    \frac{1}{2\pi}
    \int_{\R}
    \varphi(\omega)e^{-i\omega x}\,d\omega .
\]
Let $\phi_k$ be the normalized cosine basis of \eqref{eq:product-basis} at
$d=1$, which absorbs the primed-sum convention of
\eqref{eq:cosine-series-1d}. The corresponding coefficients and truncated
series are
\[
    \mathcal F_k
    =
    \int_a^b f(x)\phi_k(x)\,dx,
    \qquad
    \tilde f=\sum_{k=0}^{K-1}\mathcal F_k\phi_k.
\]
This is the $d=1$ case of \eqref{eq:multD_Fourier_cosine}, and
$\mathcal F_k=\beta_k(b-a)F_k/2$ for the $F_k$ of
\eqref{eq:cosine-series-1d}.
Substituting the inverse Fourier representation of $f$ and applying Fubini's
theorem gives the exact relation between $\varphi$ and $\mathcal F_k$:
\begin{equation}
    \label{eq:chf-side-1d}
    \boxed{
    \mathcal F_k
    =
    \frac{1}{2\pi}
    \int_{\R}
    \varphi(\omega)
    \Psi(\omega,k)\,d\omega,
    \quad \textrm{with} \quad
    \Psi(\omega,k)
    :=
    \int_a^b
    e^{-i\omega x}\phi_k(x)\,dx .
    }
\end{equation}
The kernel $\Psi(\cdot,k)$ is the truncated Fourier transform of $\phi_k$, and
it is bounded. Writing the cosine as a half-sum of exponentials gives a closed
form in terms of
\begin{equation}
    \label{eq:E-kernel}
    E(\nu;l,u)
    :=
    \int_{l}^{u}
    e^{i\nu x}\,dx
    =
    \frac{
        e^{i\nu u}-e^{i\nu l}
    }{i\nu}
    \quad \textrm{for} \;
    \nu\neq0,
    \quad \textrm{with} \quad
    E(0;l,u)
    =
    u-l .
\end{equation}

The relation extends directly to multiple dimensions. Let
$f(\bx),\varphi(\bomega)\in L^1(\R^d)$ form a Fourier-transform pair.
Then the Fourier--cosine coefficient tensor $\mathcal{F}_{\bk}$ of
\eqref{eq:F-tensor} and the ch.f.\ $\varphi(\bomega)$ satisfy
\begin{equation}
    \label{eq:chf-side}
    \mathcal{F}_{\bk}
    =
    \frac{1}{(2\pi)^d}
    \int_{\R^d}
    \varphi(\bomega)
    \bm{\Psi}_{\bk}(\bomega)\,d\bomega,
    \quad \textrm{with} \quad
    \bm{\Psi}_{\bk}(\bomega)
    :=
    \int_{\mathbf{D}}
    e^{-i\bomega\cdot\bx}
    \Phi_{\bk}(\bx)\,d\bx.
\end{equation}
The key property of the kernel is separability. Both $\Phi_{\bk}$ of
\eqref{eq:product-basis} and $e^{-i\bomega\cdot\bx}$ are products of univariate
factors, so the multidimensional kernel factorizes as
\begin{equation}
    \label{eq:Psi-separable}
    \bm{\Psi}_{\bk}(\bomega)
    =
    \prod_{j=1}^{d}
    \Psi_j(\omega_j,k_j),
    \quad \textrm{with} \quad    \Psi_j(\omega,k)
    :=
    \int_{l_j}^{u_j}
    e^{-i\omega x}
    \phi^{(j)}_k(x)\,dx.
\end{equation}
Each one-dimensional factor is available analytically, through the same
$E$ of \eqref{eq:E-kernel} evaluated on the $j$-th interval:
\begin{equation}
    \label{eq:Psi-closed-form}
    \Psi_j(\omega,k)
    =
    \frac{
        \beta^{(j)}_k
    }{2}
    \sum_{s=\pm1}
    \exp\!\left(
        -\frac{isk\pi l_j}{u_j-l_j}
    \right)
    E\!\left(
        \frac{sk\pi}{u_j-l_j}-\omega
        ;\,
        l_j,u_j
    \right).
\end{equation}
No ch.f.\ or model parameter appears in
\eqref{eq:Psi-closed-form}. The kernel can therefore be tabulated independently
of the probabilistic model.

Equations~\eqref{eq:chf-side} and \eqref{eq:Psi-separable} now yield the
core-wise TT relation between the ch.f.\ and $\mathcal{F}$.

\subsubsection{Core-wise relation between the ch.f.\
and the Fourier--cosine coefficient tensor}
\label{subsec:chf-core-relation}

The separability in \eqref{eq:Psi-separable} allows the alternative COS
representation \eqref{eq:chf-side} to be evaluated directly in FTT form.
Suppose that the ch.f.\ admits the continuous rank-$\mathbf{r}$ FTT
approximation
\begin{equation}
\label{eq:chf-FTT}
    \varphi(\bomega)
    \approx
    \varphi^{\mathbf{r}}(\bomega)
    :=
    \sum_{\balpha}^{\mathbf r}
    \prod_{j=1}^{d}
    Z_j(
        \alpha_{j-1};
        \omega_j;
        \alpha_j
    ),
\end{equation}
and assume that the following integrals exist.

Substituting \eqref{eq:chf-FTT} and
\eqref{eq:Psi-separable} into
\eqref{eq:chf-side}, and applying the separable
contraction property of
Theorem~\ref{thm:FTT-inner-product}, gives
\begin{equation}
    \label{eq:chf-F-TT}
    \mathcal{F}_{\bk}
    \approx
    \mathcal{F}^{\varphi}_{\mathbf{r},\bk}
    :=
    \sum_{\balpha}^{\mathbf r}
    \prod_{j=1}^{d}
    F^{\varphi}_j(
        \alpha_{j-1},
        k_j,
        \alpha_j
    ),
\end{equation}
with
\begin{equation}
    \label{eq:chf-cores}
    \boxed{
    F^{\varphi}_j(
        \alpha_{j-1},
        k_j,
        \alpha_j
    )
    =
    \frac{1}{2\pi}
    \int_{\R}
    Z_j(
        \alpha_{j-1};
        \omega;
        \alpha_j
    )
    \Psi_j(
        \omega,
        k_j
    )
    \,d\omega.
    }
\end{equation}
The kernel factor $\Psi_j$ is known analytically from
\eqref{eq:Psi-closed-form}, so \eqref{eq:chf-cores}
is computable once the cores $Z_j$ are available.

Thus, the transformation from the ch.f.\ FTT to the Fourier--cosine
coefficient TT acts independently in each dimension: the $j$-th coefficient
core depends only on the $j$-th ch.f.\ core. Equation~\eqref{eq:chf-cores}
integrates only the frequency variable and leaves the auxiliary indices
unchanged, so it does not increase the TT ranks.
The approximation in \eqref{eq:chf-F-TT} is inherited
solely from the FTT approximation
\eqref{eq:chf-FTT}; the relation
\eqref{eq:chf-side} itself is exact.

Inserting \eqref{eq:chf-F-TT} into \eqref{eq:multD_Fourier_cosine} and
regrouping the $\bk$ sum by dimension, as in \eqref{eq:COS-TT-discrete}, gives a
density approximation of the same form as
\eqref{eq:COS-TT-approx}--\eqref{eq:functional-core}, with the coefficient cores
$\hat{A}_j$ of $\mathcal A$ replaced by the cores $F^{\varphi}_j$ of
$\mathcal F$; the superscript $\varphi$ identifies the compressed object.
Synthesizing those cores in the cosine basis gives the density FTT characterized
in Section~\ref{subsec:duality}. What it approximates is $\tilde f$ rather than
$\hat f$, because \eqref{eq:chf-side} is exact: on this route the only
approximation so far is the FTT approximation \eqref{eq:chf-FTT}.

The cores $Z_j$ are still idealized. In practice we sample the ch.f.\ on a
finite frequency grid and compute discrete cores. The derivation below repeats
the steps above with frequency-domain truncation and quadrature in place, and
displays the resulting synthesis once, in
\eqref{eq:COS-TT-CHF-approx-quad}--\eqref{eq:functional-core-chf-quad}.

\subsubsection{COS-TT-CHF: the same derivation in discretized form}
\label{subsec:chf-cores-computation}

We first truncate the frequency domain to a finite
hyperrectangle:
\begin{equation}
    \label{eq:domain-Omega}
    \bm{\Omega}
    :=
    [-\Omega_1,\Omega_1]
    \times\cdots\times
    [-\Omega_d,\Omega_d]
    \subset\R^d,
    \qquad
    \Omega_j>0,
\end{equation}
with $\Omega_j$ the half-width in coordinate $j$. We also write $\bm{\Omega}$
for the vector $(\Omega_1,\ldots,\Omega_d)^\top$ of half-widths.

Define
\begin{equation}
    \label{eq:F-Omega}
    \mathcal{F}_{\bk}\approx
    \tilde{\mathcal{F}}_{\bk}
    :=
    \frac{1}{(2\pi)^d}
    \int_{\bm{\Omega}}
    \varphi(\bomega)
    \bm{\Psi}_{\bk}(\bomega)\,d\bomega.
\end{equation}
Because the frequency-domain range is a product of one-dimensional intervals, the
core-wise factorization remains valid after truncation. The integrals in
\eqref{eq:chf-cores} are simply restricted from $\R$ to
$[-\Omega_j,\Omega_j]$.

Then we apply a quadrature rule to 
\eqref{eq:F-Omega}.
In coordinate $j$, let
\[
    \omega^{(j)}_1,
    \ldots,
    \omega^{(j)}_{M_j}
    \in
    [-\Omega_j,\Omega_j]
\]
be quadrature points with corresponding weights
$w^{(j)}_1,\ldots,w^{(j)}_{M_j}$, and call
$M_j$ the quadrature point count in coordinate $j$.
The sampled ch.f.\ tensor is
\begin{equation}
    \label{eq:chf-sampled}
    \bm{\varphi}[m_1,\ldots,m_d]
    :=
    \varphi\!\left(
        \omega^{(1)}_{m_1},
        \ldots,
        \omega^{(d)}_{m_d}
    \right).
\end{equation}
Applying the quadrature rule to \eqref{eq:F-Omega} with the exact samples
\eqref{eq:chf-sampled} defines the discretized coefficient tensor
\begin{equation}
    \label{eq:F-Omega-M}
    \tilde{\mathcal{F}}_{\bk}\approx
    \hat{\mathcal{F}}_{\bk}
    :=
    \frac{1}{(2\pi)^d}
    \sum_{m_1=1}^{M_1}
    \cdots
    \sum_{m_d=1}^{M_d}
    \left(
        \prod_{j=1}^{d} w^{(j)}_{m_j}
    \right)
    \bm{\varphi}[m_1,\ldots,m_d]\,
    \bm{\Psi}_{\bk}\!\left(
        \omega^{(1)}_{m_1},\ldots,\omega^{(d)}_{m_d}
    \right).
\end{equation}

Equation~\eqref{eq:F-Omega-M} defines the discretized coefficient tensor but is
not evaluated as a full tensor-product sum. Instead, cross interpolation
approximates the sampled ch.f.\ tensor by a discrete TT:
\begin{equation}
    \label{eq:chf-sampled-TT}
    \bm{\varphi}
    \approx
    \hat{\bm{\varphi}}, \qquad
    \hat{\bm{\varphi}}[m_1,\ldots,m_d]
    :=
    \sum_{\balpha}^{\mathbf r}
    \prod_{j=1}^{d}
    \hat{Z}_j(
        \alpha_{j-1},
        m_j,
        \alpha_j
    ),
\end{equation}
where the hat indicates that the computed TT cores are approximate.
Substituting \eqref{eq:chf-sampled-TT} into \eqref{eq:F-Omega-M} and using the
kernel factorization \eqref{eq:Psi-separable} gives the core-wise transformation
\begin{equation}
    \label{eq:chf-cores-quad}
    \boxed{
    \hat{F}^{\varphi}_j(
        \alpha_{j-1},
        k_j,
        \alpha_j
    )
    =
    \frac{1}{2\pi}
    \sum_{m=1}^{M_j}
    w^{(j)}_m
    \hat{Z}_j(
        \alpha_{j-1},
        m,
        \alpha_j
    )
    \Psi_j\!\left(
        \omega^{(j)}_m,
        k_j
    \right).
    }
\end{equation}
That is, \eqref{eq:chf-cores-quad} discretizes the exact core-wise relation
\eqref{eq:chf-cores}: the frequency integral is replaced
by its quadrature sum, and the functional ch.f.\ cores $Z_j$ by the discrete
cores $\hat{Z}_j$ of the sampled tensor.
For fixed core dimensions this is a matrix multiplication between the sampled
ch.f.\ core and the precomputed kernel matrix, so the non-increase of TT ranks
noted above carries over. Let $\hat{\mathcal{F}}^{\varphi}_{\mathbf{r}}$ denote
the coefficient tensor represented by \eqref{eq:chf-cores-quad}, equivalently
\eqref{eq:F-Omega-M} with $\bm{\varphi}$ replaced by
$\hat{\bm{\varphi}}$. Let $\hat{f}_{\mathbf{r}\text{-}\varphi\text{TT}}$ denote
the density it synthesizes. These are the COS-TT-CHF counterparts of
$\hat{\mathcal{A}}$ and $\hat{f}_{\mathbf{r}\text{-CTT}}$.

TT-DMRG-greedy computes the cores in \eqref{eq:chf-sampled-TT} from selected
entries. Each costs one ch.f.\ evaluation rather than the $2^{d-1}$ evaluations
required for a COS coefficient. The tensor-algorithm complexity of
Section~\ref{subsec:TT-algorithms} therefore applies separately from the
model-dependent cost of one ch.f.\ evaluation.

In coefficient form, \eqref{eq:chf-F-TT} now uses the computed cores
$\hat{F}^{\varphi}_j$ in place of the idealized $F^{\varphi}_j$, so that
$\mathcal{F}^{\varphi}_{\mathbf{r}}$ becomes
$\hat{\mathcal{F}}^{\varphi}_{\mathbf{r}}$. Synthesizing these cores in the
cosine basis gives the COS-TT-CHF density:
\begin{equation}
    \label{eq:COS-TT-CHF-approx-quad}
    \boxed{
    \tilde{f}(\bx)
    \approx
    \hat{f}_{\mathbf{r}\text{-}\varphi\text{TT}}(\bx)
    :=
    \sum_{\balpha}^{\mathbf r}
    \prod_{j=1}^{d}
    \hat{\Gamma}^{\varphi}_j(
        \alpha_{j-1};
        x_j;
        \alpha_j
    ),
    }
\end{equation}
with 
\begin{equation}
    \label{eq:functional-core-chf-quad}
    \boxed{
    \hat{\Gamma}^{\varphi}_j(
        \alpha_{j-1};
        x_j;
        \alpha_j
    )
    :=
    \sum_{k_j=0}^{K_j-1}
    \hat{F}^{\varphi}_j(
        \alpha_{j-1},
        k_j,
        \alpha_j
    )
    \phi^{(j)}_{k_j}(x_j).}
\end{equation}

\paragraph{Summary.} The COS-TT-CHF approximation chain separates
Fourier--cosine truncation from the ch.f.-side approximation of $\mathcal F$.
The latter comprises three operations:
\begin{equation}
    \label{eq:chf-chain}
    \begin{aligned}
    f
    \;&
    \xrightarrow[\;\varepsilon_{\bK}\;]{\;\text{Fourier--cosine truncation}\;}
    \;
    \tilde{f}
    \;
    \xrightarrow[\;
        \varepsilon_{\bm{\Omega}},\,
        \varepsilon_{\mathrm{quad}},\,
        \varepsilon_{\varphi\TT}
    \;]{\;
        \mathcal{F}
        \,\to\,
        \hat{\mathcal{F}}^{\varphi}_{\mathbf{r}}
    \;}
    \;
    \hat{f}_{\mathbf{r}\text{-}\varphi\text{TT}},
    \\[0.6em]
    \text{with}
    \;&
    \underbrace{
        \mathcal{F}
    }_{\text{exact, \eqref{eq:chf-side}}}
    \;
    \xrightarrow[\;\varepsilon_{\bm{\Omega}}\;]{
        \;
        \int_{\R^d}
        \,\to\,
        \int_{\bm{\Omega}}
        \;
    }
    \;
    \underbrace{
        \tilde{\mathcal{F}}
    }_{\text{\eqref{eq:F-Omega}}}
    \;
    \xrightarrow[\;\varepsilon_{\mathrm{quad}}\;]{
        \;
        \int
        \,\to\,
        \textstyle\sum_{\mathbf m}\prod_j w^{(j)}_{m_j}
        \;
    }
    \;
    \underbrace{
        \hat{\mathcal{F}}
    }_{\text{\eqref{eq:F-Omega-M}}}
    \;
    \xrightarrow[\;\varepsilon_{\varphi\TT}\;]{
        \;
        \bm{\varphi}
        \,\to\,
        \hat{\bm{\varphi}}
        \;
    }
    \;
    \underbrace{
        \hat{\mathcal{F}}^{\varphi}_{\mathbf{r}}
    }_{\text{\eqref{eq:chf-cores-quad}}}.
    \end{aligned}
\end{equation}
These three ch.f.-side steps are specific to COS-TT-CHF and replace the
single COS-TT coefficient approximation $\mathcal{F}\to\mathcal{A}$.
Section~\ref{sec:error-chf} analyses them in this order and derives from the
first two the selection rules for $\bm{\Omega}$ and $\mathbf{M}$.

\subsection{Relation to functional tensor trains}
\label{subsec:duality}

Both methods construct cosine-basis FTTs from the ch.f.\ without evaluating the
density. Let $g_{\mathbf r\text{-TT}}$ be the FTT \eqref{eq:FTT-rank-r} with
functional cores $B_j$, and define their Fourier--cosine coefficient cores by
$G_j(\alpha_{j-1},k_j,\alpha_j)
:=\langle B_j(\alpha_{j-1};\cdot;\alpha_j),
\phi^{(j)}_{k_j}\rangle_{L^2(D_j)}$.
By separability of the basis \eqref{eq:product-basis},
Theorem~\ref{thm:FTT-inner-product} with $v=\Phi_{\bk}$ gives
\[
\langle g_{\mathbf r\text{-TT}},\Phi_{\bk}\rangle_{L^2(\mathbf D)}
=\sum_{\balpha}^{\mathbf r}\prod_{j=1}^{d}
G_j(\alpha_{j-1},k_j,\alpha_j).
\]
Thus, for every finite $\bK$, the coefficient tensor is a discrete TT
\eqref{eq:TT-discrete} with ranks bounded by $\mathbf r$. Conversely, synthesizing these cores
in the cosine basis, as in \eqref{eq:discrete-core-function} and
\eqref{eq:functional-core-chf-quad}, gives the orthogonal projection of the FTT,
which converges to it in $L^2$ as $\bK\to\infty$. Analysis and synthesis act
only on the physical index and leave the auxiliary indices unchanged. Hence the
complete coefficient and functional representations are equivalent. The
spectral FTT literature
\citep{bigoni2016spectral,gorodetsky2019continuous} compresses the function
before expanding its cores and therefore requires pointwise evaluations of $f$.
The tensorized COS methods instead construct the coefficient cores from the
ch.f.

At the representation level, this identification also establishes convergence
under a weaker assumption than the existing spectral derivation. For a
fixed-rank FTT, the $\bK$-limit follows from completeness of the cosine basis.
For the rank limit, $f\in L^2(\mathbf D)$ makes every unfolding of $f$ a
Hilbert--Schmidt operator, so iterated Schmidt decomposition yields finite-rank
FTTs converging to $f$ in $L^2(\mathbf D)$. Moreover, orthogonality of
$\{\Phi_{\bk}\}$ makes the Fourier--cosine projection error orthogonal to the
subsequent fixed-rank coefficient error, so the two errors add in squares. By
contrast, the polynomial spectral construction assumes
H\"older continuity with exponent above $1/2$ for the rank limit
\citep[Thm.~13]{bigoni2016spectral} and membership of
$\mathcal H^{s}(\mathbf D)$ with $s>1$ for the projection error
\citep[Prop.~3]{bigoni2016spectral}, combining them through the triangle
inequality in their eq.~(66). Both hypotheses are needed because its core
truncation is not an orthogonal projection. The weaker assumption guarantees
convergence but not a rate. Rates require coefficient-decay assumptions, stated
separately in Appendix~\ref{sec:error}. The performance of rank-adaptive
TT-cross remains an algorithmic question covered by
Assumption~\ref{ass:training}.

\section{Error analysis}
\label{sec:error-chf}
\label{subsec:chf-error-sources}

This section analyses the errors specific to the ch.f.\ route. Two of the terms
below fall outside it: the COS coefficient error $\varepsilon_{\mathrm{COS}}$,
which COS-TT-CHF avoids because \eqref{eq:chf-side} is exact, and the
series-truncation error $\varepsilon_{\bK}$, which it shares with COS-TT. Both
are covered by standard COS theory
\citep{fang2009novel,junike2025characteristic} and Appendix~\ref{sec:error}.

We measure the three ch.f.-side errors on the coefficient side. By
Theorem~\ref{thm:train-conversion}, the Frobenius error equals the $L^2$ error
of the synthesized density, so we do not repeat this conversion below.

The chain \eqref{eq:chf-chain} and the triangle inequality then give the
counterpart of \eqref{eq:error-decomp} for COS-TT-CHF:
\begin{equation}
    \label{eq:error-decomp-chf}
    \norm{
        f - \hat{f}_{\mathbf{r}\text{-}\varphi\text{TT}}
    }_{L^2(\mathbf D)}
    \leq
    \underbrace{
        \norm{f-\tilde f}_{L^2(\mathbf D)}
    }_{\varepsilon_{\bK}}
    +
    \underbrace{
        \norm{\mathcal F-\tilde{\mathcal F}}_F
    }_{\varepsilon_{\bm{\Omega}}}
    +
    \underbrace{
        \norm{\tilde{\mathcal F}-\hat{\mathcal F}}_F
    }_{\varepsilon_{\mathrm{quad}}}
    +
    \underbrace{
        \norm{
            \hat{\mathcal F}
            -
            \hat{\mathcal F}^{\varphi}_{\mathbf r}
        }_F
    }_{\varepsilon_{\varphi\TT}} .
\end{equation}
The remaining three terms are taken below in the order of the chain; the first
two also yield the selection rules for $\bm{\Omega}$ and $\mathbf M$.

\subsection{Frequency-domain truncation error
\texorpdfstring{$\varepsilon_{\bm{\Omega}}$}{eps Omega}}
\label{subsec:chf-new-errors}

Kernel localization couples the frequency-domain truncation range
to $\mathbf D$ and $\bK$.
With $|D_j|=u_j-l_j$ as above, let
\begin{equation}
    \label{eq:kappa-definition}
    \kappa_j
    :=
    \frac{(K_j-1)\pi}{|D_j|}.
\end{equation}
By \eqref{eq:Psi-closed-form},
$\Psi_j(\cdot,k_j)$ is centred at
$\pm k_j\pi/|D_j|$.
Hence $[-\Omega_j,\Omega_j]$
contains the centres of all retained cosine terms if and only if
\begin{equation}
    \label{eq:Omega-compatibility}
    \Omega_j
    \geq
    \kappa_j,
    \qquad
    j=1,\ldots,d .
\end{equation}
Condition \eqref{eq:Omega-compatibility}
is thus a lower bound imposed by the cosine expansion.
It ensures coverage of the retained cosine frequencies but not accuracy,
which also depends on the omitted ch.f.\ tail.
The following proposition quantifies this requirement and gives a
tolerance-dependent selection rule for $\Omega_j$ when ch.f.\ tail constants are
available.

\begin{proposition}
[Frequency-domain truncation error and selection]
\label{prop:Omega-selection}

Let $\tau_{\bm{\Omega}}>0$
be a prescribed tolerance for
frequency-domain truncation.
Assume that, for each $j$ and all $s>0$,
\begin{equation}
    \label{eq:chf-tail-decay}
    \left(
        \int_{\{|\omega_j|>s\}}
        |\varphi(\bomega)|^2
        \,d\bomega
    \right)^{1/2}
    \leq
    C_j s^{-q_j},
    \qquad
    q_j>0 .
\end{equation}
Then, with $\kappa_j$ defined by \eqref{eq:kappa-definition}, the condition
\begin{equation}
    \label{eq:Omega-selection}
    \boxed{
    \Omega_j
    \geq
    \max
    \left\{
        \kappa_j,
        \left(
            \frac{
                \sqrt d\,C_j
            }{
                (2\pi)^{d/2}
                \tau_{\bm{\Omega}}
            }
        \right)^{1/q_j}
    \right\}
    }
\end{equation}
ensures both coverage of the retained cosine frequencies and
\(
    \left\|
        \mathcal F-\tilde{\mathcal F}
    \right\|_F
    \leq
    \tau_{\bm{\Omega}} .
\)
\end{proposition}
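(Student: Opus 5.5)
The plan is to write the truncation error as a Fourier-side integral over the complement $\R^d\setminus\bm\Omega$ and control it with Plancherel. By \eqref{eq:chf-side} and \eqref{eq:F-Omega},
\[
    \mathcal F_{\bk}-\tilde{\mathcal F}_{\bk}
    =\frac{1}{(2\pi)^d}\int_{\R^d}\varphi(\bomega)\,\mathbf 1_{\bm\Omega^c}(\bomega)\,\bm\Psi_{\bk}(\bomega)\,d\bomega .
\]
Here $\bm\Psi_{\bk}(\bomega)=\int_{\mathbf D}e^{-i\bomega\cdot\bx}\Phi_{\bk}(\bx)\,d\bx$ is the Fourier transform (with the $e^{-i}$ sign) of $\Phi_{\bk}\mathbf 1_{\mathbf D}\in L^2(\R^d)$.

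Define $g:=\varphi\mathbf 1_{\bm\Omega^c}\in L^2(\R^d)$; this holds because $\varphi\in L^1\cap L^\infty$, or directly by \eqref{eq:chf-tail-decay}. Let $h:=(2\pi)^{-d}\int g(\bomega)e^{-i\bomega\cdot\bx}\,d\bomega$ be its inverse transform in the $L^2$ sense. Parseval then gives $\mathcal F_{\bk}-\tilde{\mathcal F}_{\bk}=\langle h,\Phi_{\bk}\rangle_{L^2(\mathbf D)}$, with the conjugation harmless since $\Phi_{\bk}$ is real. I will verify this identity first on a dense class and then extend it by continuity, or justify it with Fubini using $\varphi\in L^1$.

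Since $\{\Phi_{\bk}\}$ is orthonormal in $L^2(\mathbf D)$, Bessel's inequality gives
\[
    \|\mathcal F-\tilde{\mathcal F}\|_F^2\le\|h\|_{L^2(\mathbf D)}^2\le\|h\|_{L^2(\R^d)}^2=(2\pi)^{-d}\|g\|_{L^2(\R^d)}^2
\]
by Plancherel. The finite-$\bK$ sum only helps here.

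Next I bound $\|g\|_{L^2}$ using the tail assumption. The complement satisfies $\bm\Omega^c\subseteq\bigcup_j\{|\omega_j|>\Omega_j\}$, so a union bound and \eqref{eq:chf-tail-decay} give $\|g\|_2^2\le\sum_j C_j^2\Omega_j^{-2q_j}$. The selection rule \eqref{eq:Omega-selection} enforces $C_j\Omega_j^{-q_j}\le(2\pi)^{d/2}\tau_{\bm\Omega}/\sqrt d$. Summing over $j$ yields $\|g\|_2^2\le(2\pi)^d\tau_{\bm\Omega}^2$, and hence $\|\mathcal F-\tilde{\mathcal F}\|_F\le\tau_{\bm\Omega}$. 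Coverage is immediate, because $\Omega_j\ge\kappa_j$ is exactly \eqref{eq:Omega-compatibility}.

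The main obstacle is getting the constants right. That means tracking the $(2\pi)^d$ normalization in Plancherel for the convention $\varphi(\bomega)=\int f e^{i\bomega\cdot\bx}$, and justifying the Parseval step from the pointwise integral to $\langle h,\Phi_{\bk}\rangle$. If the sharper route through Plancherel gave a mismatched power of $2\pi$, I would fall back on Cauchy--Schwarz entrywise. That fallback is worse, since it would not give a $\bK$-independent bound, so I will stick with the Bessel/Plancherel argument and only check the normalization.
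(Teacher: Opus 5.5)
Your proposal is correct and follows essentially the same route as the paper. Your $h$, the inverse transform of $\varphi\mathbf 1_{\R^d\setminus\bm\Omega}$, is exactly the paper's $f-f_{\bm\Omega}$, and your remaining steps match the paper one for one: Bessel's inequality, enlarging $L^2(\mathbf D)$ to $L^2(\R^d)$, Plancherel with the factor $(2\pi)^{-d/2}$, the slab union bound, equal allocation $\tau_{\bm\Omega}/\sqrt d$ per coordinate, and coverage from $\Omega_j\ge\kappa_j$. The only cosmetic difference is that you obtain $\mathcal F_{\bk}-\tilde{\mathcal F}_{\bk}=\langle h,\Phi_{\bk}\rangle$ by Parseval, whereas the paper uses a Fubini interchange; your constants are right.
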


\begin{proof}

Since $|\varphi|\leq1$ on the bounded region $\bm{\Omega}$ and
\eqref{eq:chf-tail-decay} makes the tail integral finite, $\varphi\in
L^2(\R^d)$ and hence $f\in L^2(\R^d)$, which is what the Plancherel step below
requires. This is slightly more than the $L^2(\mathbf D)$ assumption of
Remark~\ref{rem:norms}.

Define the frequency-truncated inverse
Fourier transform by
\begin{equation}
    \label{eq:density-Omega}
    f_{\bm{\Omega}}(\bx)
    :=
    \frac{1}{(2\pi)^d}
    \int_{\bm{\Omega}}
    \varphi(\bomega)
    e^{-i\bomega\cdot\bx}
    \,d\bomega .
\end{equation}
Equivalently,
$f_{\bm{\Omega}}$ is the inverse Fourier
transform of
\(
    \varphi(\bomega)
    \mathbf 1_{\bm{\Omega}}(\bomega).
\)

Interchanging the order of integration in \eqref{eq:F-Omega}, as in the
derivation of \eqref{eq:chf-side}, gives
$\tilde{\mathcal F}_{\bk}=\int_{\mathbf D}f_{\bm{\Omega}}\Phi_{\bk}\,d\bx$.
Since $\mathcal F_{\bk}=\int_{\mathbf D}f\Phi_{\bk}\,d\bx$ by
\eqref{eq:F-tensor}, the two differ by
$\mathcal F_{\bk}-\tilde{\mathcal F}_{\bk}
=\langle f-f_{\bm{\Omega}},\Phi_{\bk}\rangle_{L^2(\mathbf D)}$.
Since the retained
$\{\Phi_{\bk}\}$ are orthonormal,
Bessel's inequality gives
\begin{equation}
    \label{eq:Bessel-F-error}
    \|
        \mathcal F
        -
        \tilde{\mathcal F}
    \|_F^2
    =
    \sum_{\bk}
    \left|
        \left\langle
            f-f_{\bm{\Omega}},
            \Phi_{\bk}
        \right\rangle
    \right|^2
    \leq
    \|f-f_{\bm{\Omega}}\|_{L^2(\mathbf D)}^2 .
\end{equation}
No basis constant appears, because the normalization
$\beta^{(j)}_{k_j}$ of \eqref{eq:product-basis} makes
$\|\Phi_{\bk}\|_{L^2(\mathbf D)}=1$ for every retained
$\bk$, the index $k_j=0$ included. With the unnormalized cosines of
\eqref{eq:cosine-series-1d} the same argument would carry a factor
$\mathrm{vol}(\mathbf D)^{1/2}$ into \eqref{eq:Omega-selection}, growing with
the physical-domain truncation range.

From \eqref{eq:density-Omega},
the Fourier transform of
$f-f_{\bm{\Omega}}$ is
\(
    \varphi(\bomega)
    \mathbf 1_{
        \R^d\setminus\bm{\Omega}
    }(\bomega).
\)
Under the Fourier convention used here,
Plancherel's theorem therefore yields
\begin{align}
&
\|f-f_{\bm{\Omega}}\|_{L^2(\R^d)}
=
\frac{1}{(2\pi)^{d/2}}
\left(
    \int_{\R^d\setminus\bm{\Omega}}
    |\varphi(\bomega)|^2
    \,d\bomega
\right)^{1/2}.
\label{eq:Plancherel-Omega}
\end{align}
Two elementary inclusions now finish the estimate. Since $\mathbf D\subset\R^d$,
the left-hand side of \eqref{eq:Bessel-F-error} is bounded by the global norm
\eqref{eq:Plancherel-Omega}. Since the complement of the product region
$\bm{\Omega}=\prod_{j=1}^d[-\Omega_j,\Omega_j]$ lies in the union of the $d$
slabs $\{|\omega_j|>\Omega_j\}$, the integral in \eqref{eq:Plancherel-Omega} is
bounded by the sum of the $d$ slab integrals. Together they give the
coordinate-wise bound
\begin{equation}
\label{eq:Omega-coordinate-bound}
\|
    \mathcal F-\tilde{\mathcal F}
\|_F
\leq
\frac{1}{(2\pi)^{d/2}}
\left(
    \sum_{j=1}^d
    \int_{\{|\omega_j|>\Omega_j\}}
    |\varphi(\bomega)|^2
    \,d\bomega
\right)^{1/2},
\end{equation}
which holds for any ch.f.\ tail. The hypothesis \eqref{eq:chf-tail-decay} bounds
the $j$-th slab integral by $C_j^2\Omega_j^{-2q_j}$. Allocating the tolerance
equally among the $d$ coordinates,
$(2\pi)^{-d/2}C_j\Omega_j^{-q_j}\leq\tau_{\bm{\Omega}}/\sqrt d$ for
$j=1,\ldots,d$, is then sufficient for
$\|\mathcal F-\tilde{\mathcal F}\|_F\leq\tau_{\bm{\Omega}}$, and solving for
$\Omega_j$ gives the second entry of the maximum in
\eqref{eq:Omega-selection}.

Finally, \eqref{eq:Omega-compatibility} imposes the lower bound on $\Omega_j$
needed to contain the centres of all retained kernel terms. Combining all
requirements gives \eqref{eq:Omega-selection}.
\end{proof}

When the tail constants in \eqref{eq:chf-tail-decay} are unavailable,
\eqref{eq:Omega-compatibility} provides a directly computable baseline. In
practice, one starts with $\Omega_j\geq\kappa_j$ and increases $\Omega_j$ until
the computed quantities stabilize. Section~\ref{subsec:num-window} tests this
baseline numerically.

\subsection{Quadrature error
\texorpdfstring{$\varepsilon_{\mathrm{quad}}$}{eps quad}}
\label{subsec:chf-difficulty}

We evaluate the truncated integral \eqref{eq:F-Omega} by
$M_j$-point Gauss--Legendre quadrature on $[-\Omega_j,\Omega_j]$. The integrand
is the product of the ch.f.\ and $\bm{\Psi}_{\bk}$, so the quadrature error
$\varepsilon_{\mathrm{quad}}$ depends on both the model and the cosine
parameters $\mathbf D$ and $\bK$.

Figure~\ref{fig:chf-oscillations} illustrates the resolution problem. The
integrand is smooth but can be strongly oscillatory, and its resolution depends
on both the quadrature point count and the location within the truncation
window. Smoothness alone therefore does not determine $\mathbf M$.

\begin{figure}[h]
  \centering
  \includegraphics[width=0.90\textwidth]{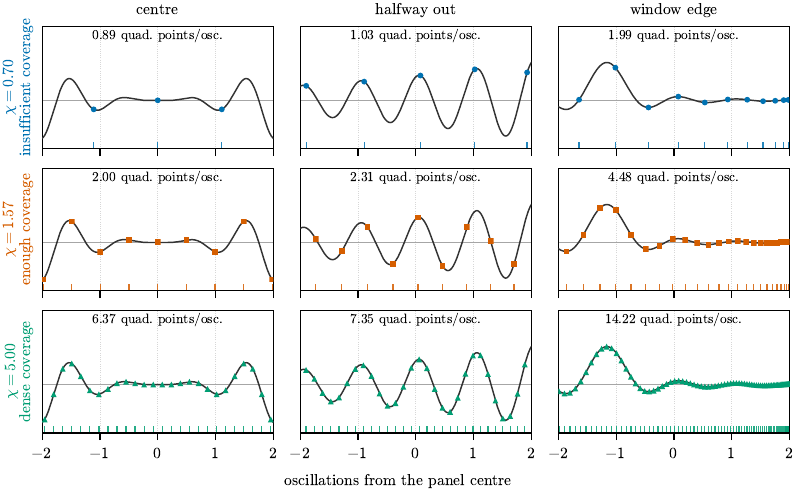}
  \caption{
  The same oscillatory kernel at three quadrature point counts $M$ (rows) and
  three window positions (columns); $K=64$, $\lambda_1=8$. Rows are labelled by
  the unsubscripted $\chi := M/(c(K-1))$ with $c := \Omega/\kappa$, which is a
  rescaling of the ratio $M/\chi_j$ and not the dimensionless frequency $\chi_j$
  of \eqref{eq:chi-def}: $\chi = (\pi/2)\,M/\chi_j$. The rows sit below, at, and
  well above
  $\chi = \pi/2$, equivalently $M = \chi_j$, where Gauss--Legendre places two
  quadrature points per oscillation at the window centre. Columns move the same
  oscillations from the centre of $\bm\Omega$ to its edge, where arcsine
  clustering gives the same $M$ more points per oscillation.}
  \label{fig:chf-oscillations}
\end{figure}

For smooth or analytic integrands,
Gauss quadrature converges rapidly
\citep{trefethen2008gauss,
xiang2012convergence}.
For highly oscillatory integrands, however,
this regime is reached only after the
oscillations are sufficiently resolved
\citep{deano2018computing}.
This motivates a phase-resolution scaling for
$\mathbf M$.

Consider a factor $e^{ir\omega_j}$, where $r\in\R$ is the phase rate and
$|r|$ is the oscillation rate in $\omega_j$.
After mapping
$\omega_j=\Omega_j t$ to $[-1,1]$,
the dimensionless frequency of the factor is
\begin{equation}
    \label{eq:chi-def}
    \chi_j
    :=
    \Omega_j|r|.
\end{equation}
For an integrand $h$, the classical $M_j$-point Gauss--Legendre
remainder is
$E_{M_j}=c_{M_j}h^{(2M_j)}(\xi)$
for some $\xi\in(-1,1)$, with
$c_{M_j}=2^{2M_j+1}(M_j!)^4/\{(2M_j+1)[(2M_j)!]^3\}$
\citep[Ch.~5]{atkinson1989introduction}.
All derivatives of order $2M_j$ of
$\cos(\chi_jt)$ and $\sin(\chi_jt)$
are bounded by $\chi_j^{2M_j}$, so, by Stirling's
formula,
\[
    |E_{M_j}|
    \leq
    c_{M_j}\chi_j^{2M_j}
    =
    O\!\left[
        M_j^{-1/2}
        \left(
            \frac{e\,\chi_j}{4M_j}
        \right)^{2M_j}
    \right],
\]
where $e=\exp(1)$ is Euler's number. The bound begins to decay only when
$M_j>(e/4)\chi_j\approx0.68\,\chi_j$, so $M_j$ must scale linearly with the
effective oscillation frequency before rapid convergence can be expected.
Section~\ref{subsec:num-window} observes the onset of the aliasing regime near
this predicted threshold.

For the present kernel, \eqref{eq:E-kernel} shows that
$E(q-\omega;l_j,u_j)$ carries the phases
$e^{-i\omega u_j}$ and $e^{-i\omega l_j}$, so its
phase rates in $\omega$ are $|l_j|$ and $|u_j|$.
Multiplication by the ch.f.\ may shift or increase these rates.
Write each oscillatory component of the integrand in \eqref{eq:chf-side} as
$A(\bomega)e^{i\theta(\bomega)}$, with $A$ slowly varying, and let
$R_j^{\mathrm{eff}} := \sup\bigl|\partial\theta/\partial\omega_j\bigr|$
denote the largest phase rate over these components.
This suggests the phase-resolution scaling
$M_j \gtrsim \Omega_j R_j^{\mathrm{eff}}$. We use the lower-bound notation
rather than $O(\cdot)$ because this scaling specifies a minimum quadrature
resolution.

A directly usable special case is
$\varphi(\bomega)
= e^{i\bmu\cdot\bomega}\varphi_0(\bomega)$,
as in many location-scale models,
where $\varphi_0$ is nonoscillatory.
Multiplication with the kernel then produces
the phases
$e^{i(\mu_j-u_j)\omega_j}$ and
$e^{i(\mu_j-l_j)\omega_j}$, so
$R_j^{\mathrm{eff}}
= \max\{|\mu_j-l_j|,|\mu_j-u_j|\}$.
If $D_j$ is centred at $\mu_j$, then
$R_j^{\mathrm{eff}}=|D_j|/2$, giving the practical resolution guide
\begin{equation}
    \label{eq:M-centered-guide}
    \boxed{
        M_j
        \gtrsim
        \frac{\Omega_j|D_j|}{2}
        .
    }
\end{equation}

This centered-domain guide also explains the coupling with $\bK$.
If $\Omega_j$ is chosen near the
compatibility scale of
\eqref{eq:Omega-compatibility},
that is
$\Omega_j=c_{\Omega,j}\kappa_j
=c_{\Omega,j}(K_j-1)\pi/|D_j|$
with a fixed multiplier $c_{\Omega,j}\geq1$, independent of $K_j$ and $|D_j|$,
then \eqref{eq:M-centered-guide} gives
$M_j\gtrsim K_j$.
Since \eqref{eq:Omega-compatibility} is a
lower bound, a wider window required by the
ch.f.\ tail increases $M_j$ proportionally.
Thus $\mathbf M$ is coupled to the
cosine expansion rather than determined
by the ch.f.\ alone.

Section~\ref{subsec:num-window} examines the resulting error. The ratio
$M_j/\chi_j$ of \eqref{eq:M-over-chi} governs the transition, and
Figure~\ref{fig:nodes} shows an error plateau near $M_j=\chi_j$ for GBM, at the
resolution that Figure~\ref{fig:chf-oscillations} reaches at the window centre.

The general phase-resolution scaling is heuristic, whereas
\eqref{eq:M-centered-guide} is directly usable in the centered location-scale
case above. Neither is an a priori tolerance bound.
In practice, $M_j$ is increased until
the computed quantities stabilize at
the required tolerance.
Increasing $\Omega_j$ reduces
$\varepsilon_{\bm{\Omega}}$ but also
increases the required quadrature
resolution. Thus, $\bm{\Omega}$ and
$\mathbf M$ should be selected jointly.

Oscillation constrains accuracy, but not necessarily TT rank:
$e^{i\bomega^\top\bmu}=\prod_j e^{i\omega_j\mu_j}$
is strongly oscillatory yet exactly separable, with TT rank one on any grid.
Too narrow a frequency-domain truncation range and too coarse a quadrature
therefore inflate $\varepsilon_{\bm{\Omega}}$ and $\varepsilon_{\mathrm{quad}}$
without inflating the ranks.

\begin{remark}[Oscillatory quadrature]
\label{rem:oscillatory-quadrature}

Standard Gauss--Legendre quadrature does
not exploit the known oscillatory structure
and requires increasing resolution as the effective
frequency grows
\citep{deano2018computing}.
Filon, Levin, numerical steepest-descent and oscillatory Gaussian quadratures
are designed for this regime. Because the oscillatory factors in $\Psi_j$ are
known analytically, applying such rules to the one-dimensional core
transformations may reduce the required $M_j$ without changing the TT
construction.
\end{remark}

\subsection{Propagated TT-decomposition error: 
\texorpdfstring{$\varepsilon_{\varphi\TT}$}{eps phi TT}}
\label{subsec:chf-tt-error}

The decomposition error itself arises in the same way in both methods, from rank
truncation and from the TT-cross approximation, and we do not analyse it here.
As in Appendix~\ref{sec:error}, we assume instead that the decomposition attains
a prescribed tolerance,
\begin{equation}
    \label{eq:delta-phiTT}
    \norm{
        \bm{\varphi}
        -
        \hat{\bm{\varphi}}
    }_F
    \leq
    \delta_{\varphi\TT},
\end{equation}
the counterpart for $\bm{\varphi}$ of Assumption~\ref{ass:training} for
$\mathcal A$. Section~\ref{subsec:eps-TT} explains why this quantity is assumed
rather than derived, and how it is verified numerically.

What differs between the two methods is what becomes of that tolerance
afterwards, and this is what the present subsection bounds. Only the compressed
object changes: COS-TT compresses $\mathcal A$, whereas COS-TT-CHF compresses
the sampled ch.f.\ tensor $\bm{\varphi}$. The entries of $\mathcal A$ are
already coefficients, so for COS-TT the tolerance is a coefficient error and
Theorem~\ref{thm:train-conversion} carries it straight to the density. The
entries of $\bm{\varphi}$ are not, so for COS-TT-CHF the tolerance must still
pass through the transformation \eqref{eq:chf-cores-quad} that turns ch.f.\
cores into coefficient cores. What is bounded below is therefore the image of
$\delta_{\varphi\TT}$ under that transformation, and the question is how much
the transformation can amplify it.

The dimension-wise transformation
\eqref{eq:chf-cores-quad} is linear.
With
\[
    \mathbf{Q}_j[k,m]
    :=
    \frac{1}{2\pi}
    w_m^{(j)}
    \Psi_j(
        \omega_m^{(j)},k
    ),
\]
we have
\[
\begin{aligned}
&
\operatorname{vec}
\left(
    \hat{\mathcal F}
    -
    \hat{\mathcal F}^{\varphi}_{\mathbf r}
\right)
=
\left(
    \mathbf{Q}_d\otimes\cdots\otimes \mathbf{Q}_1
\right)
\operatorname{vec}
\left(
    \bm{\varphi}
    -
    \hat{\bm{\varphi}}
\right).
\end{aligned}
\]
Hence
\begin{equation}
\label{eq:chf-TT-transfer}
\varepsilon_{\varphi\TT}
\leq
\left(
    \prod_{j=1}^d
    \norm{\mathbf{Q}_j}_2
\right)
\delta_{\varphi\TT},
\qquad
\varepsilon_{\varphi\TT}
:=
\norm{
    \hat{\mathcal F}
    -
    \hat{\mathcal F}^{\varphi}_{\mathbf r}
}_F .
\end{equation}

The factor $\prod_j\norm{\mathbf{Q}_j}_2$ is computable once $\mathbf D$,
$\bK$, $\bm{\Omega}$, $\mathbf M$, and the quadrature rule are fixed. It also
admits an explicit bound.
By \eqref{eq:Psi-separable},
$\Psi_j(\cdot,k)$ is the Fourier transform of
$\phi^{(j)}_k$ extended by zero outside $D_j$,
so $\mathbf{Q}_j\mathbf{Q}_j^{*}$ is a weighted analogue of a Plancherel
identity for the cosine basis, with the quadrature weights entering squared.
The step below factors out one power of the weights and leaves an ordinary
quadrature sum.
Fix $\mathbf{c}\in\mathbb{C}^{K_j}$ with $\norm{\mathbf{c}}_2=1$ and put
$g_c:=\sum_{k=0}^{K_j-1}c_k\phi^{(j)}_k$, so that
$\norm{g_c}_{L^2(D_j)}=\norm{\mathbf{c}}_2=1$
by orthonormality of the cosine basis.
Taking the same combination of the
$\Psi_j(\cdot,k)$ and using linearity of the integral
in \eqref{eq:Psi-separable} gives
\[
    \Psi_c(\omega)
    :=
    \sum_{k=0}^{K_j-1}
    c_k
    \Psi_j(\omega,k)
    =
    \int_{D_j}
    e^{-i\omega x}
    g_c(x)\,dx ,
\]
the Fourier transform of $g_c$ extended by zero outside $D_j$. This extension
allows the use of Plancherel's theorem below.
Combining the rows of $\mathbf{Q}_j$ with the same
$\mathbf{c}$ gives
$\bigl(\mathbf{c}^{\top}\mathbf{Q}_j\bigr)_m
=(2\pi)^{-1}w^{(j)}_m\Psi_c(\omega^{(j)}_m)$,
so that, summing over $m$,
\[
\begin{aligned}
\norm{\mathbf{c}^{\top}\mathbf{Q}_j}_2^2
&=
\frac{1}{4\pi^2}
\sum_m
\bigl(w^{(j)}_m\bigr)^2
|\Psi_c(\omega^{(j)}_m)|^2
\leq
\frac{w^{(j)}_{\max}}{4\pi^2}
\sum_m
w^{(j)}_m
|\Psi_c(\omega^{(j)}_m)|^2 ,
\end{aligned}
\]
with $w^{(j)}_{\max}:=\max_m w^{(j)}_m$.
The remaining sum is the $M_j$-point
Gauss--Legendre approximation of
$\int_{-\Omega_j}^{\Omega_j}|\Psi_c|^2\,d\omega$,
and Plancherel bounds the exact integral by
$\int_{\R}|\Psi_c|^2\,d\omega
=2\pi\norm{g_c}_{L^2(D_j)}^2=2\pi$.
Let
\begin{equation}
    \label{eq:Rquad}
    \mathcal R_j
    :=
    \sup_{\norm{\mathbf{c}}_2=1}
    \left(
        \sum_{m=1}^{M_j}
        w^{(j)}_m
        |\Psi_c(\omega^{(j)}_m)|^2
        -
        \int_{-\Omega_j}^{\Omega_j}
        |\Psi_c(\omega)|^2\,d\omega
    \right)
\end{equation}
be the worst-case quadrature remainder over the
unit sphere.
Since
$\norm{\mathbf{Q}_j}_2
=\sup_{\norm{\mathbf{c}}_2=1}\norm{\mathbf{c}^{\top}\mathbf{Q}_j}_2$,
maximizing over $\mathbf{c}$ then gives
\begin{equation}
\label{eq:Q-plancherel-bound}
    \norm{\mathbf{Q}_j}_2^2
    \leq
    \frac{w^{(j)}_{\max}}{4\pi^2}
    \left(
        2\pi+\mathcal R_j
    \right).
\end{equation}
The Plancherel constant $2\pi$ is independent of
$K_j$: it comes from orthonormality of the basis,
not from the number of retained terms.
The remainder $\mathcal R_j$ is a supremum over
$\mathbf{c}\in\mathbb{C}^{K_j}$, so it cannot decrease as
$K_j$ grows. Nevertheless, the following support argument identifies a
resolution scale with no explicit dependence on $K_j$.
Since $g_c$ is supported in $D_j$, $|\Psi_c|^2$ is
the Fourier transform of the autocorrelation of
$g_c$, which is supported in $[-|D_j|,|D_j|]$,
with $|D_j|:=u_j-l_j$.
Hence, $|\Psi_c|^2$ is entire of exponential type
$|D_j|$ for every unit $\mathbf{c}$, whichever cosine terms $\mathbf{c}$
combines. This identifies $\Omega_j|D_j|$, rather than $K_j$ directly, as the
natural resolution scale for the quadrature remainder.
The phase-rate guide \eqref{eq:M-centered-guide} of
Section~\ref{subsec:chf-difficulty} reached the same scaling by a different
route.
For Gauss--Legendre on $[-\Omega_j,\Omega_j]$ we
have $w^{(j)}_{\max}\approx\pi\Omega_j/M_j$, so in
the resolved regime, where
$\mathcal R_j\leq2\pi\varrho_j$ with $\varrho_j$ small,
\begin{equation}
\label{eq:Q-practical-bound}
    \norm{\mathbf{Q}_j}_2
    \lesssim
    \left(
        \frac{
            (1+\varrho_j)\Omega_j
        }{
            2M_j
        }
    \right)^{1/2}.
\end{equation}
Here and below, $\lesssim$ and $\gtrsim$ suppress positive constants independent
of $M_j$, $\Omega_j$, $|D_j|$ and $K_j$.

Thus, by \eqref{eq:Q-practical-bound}, with the
resolution scaling
$M_j\gtrsim\Omega_j|D_j|$,
\[
    \norm{\mathbf{Q}_j}_2
    =
    O(|D_j|^{-1/2}),
\]
up to quadrature-dependent constants. The scaling is stated as a lower rather
than an upper bound: $M_j$ enters
\eqref{eq:Q-practical-bound} in the denominator, so only a matching lower
bound on $M_j$ yields the decay in $|D_j|$.
The transfer therefore introduces no explicit adverse
dependence on $K_j$ once the quadrature resolves the
retained kernel terms.

The Plancherel step is what makes the bound decay. Without assuming that the
quadrature resolves the kernel, one can instead use the crude bound
$|\Psi_c|\leq|D_j|^{1/2}$, which follows from
Cauchy--Schwarz and $\norm{g_c}_{L^2(D_j)}=1$.
With $\sum_m w^{(j)}_m=2\Omega_j$ and the same
estimate for $w^{(j)}_{\max}$, this gives the weaker
$\norm{\mathbf{Q}_j}_2\leq
\Omega_j\bigl(|D_j|/(2\pi M_j)\bigr)^{1/2}$,
which exceeds \eqref{eq:Q-practical-bound} by a factor
$O\bigl((\Omega_j|D_j|)^{1/2}\bigr)$ and, under the same resolution scaling,
does not decay in $|D_j|$ at all, giving only $O(\Omega_j^{1/2})$.

Thus, the exact transfer bound \eqref{eq:chf-TT-transfer} is computable, while
\eqref{eq:Q-practical-bound} shows that, once the quadrature is resolved, the
transformation introduces no explicit adverse dependence on the number of
retained cosine terms.

\section{Expectation calculations}
\label{sec:basket}

Both tensorized COS methods yield the same tensorized representation of the
Fourier--cosine expansion and differ only in how its cores are computed offline,
so everything below applies to either method.

We therefore drop the method label and write $\hat{C}_j$ for the coefficient
cores, $\hat{\Gamma}_j$ for the functional cores and
$\hat{f}_{\mathbf{r}\text{-TT}}$ for the density they synthesize:
\begin{equation}
\label{eq:generic-cores}
\bigl(
\hat{C}_j,\;
\hat{\Gamma}_j,\;
\hat{f}_{\mathbf{r}\text{-TT}}
\bigr)
=
\begin{cases}
\bigl(
\hat{A}_j,\;
\hat{\Gamma}^{C}_j,\;
\hat{f}_{\mathbf{r}\text{-CTT}}
\bigr),
& \text{COS-TT, from
\eqref{eq:A-TT}, \eqref{eq:functional-core}, \eqref{eq:COS-TT-approx}},
\\[0.4em]
\bigl(
\hat{F}^{\varphi}_j,\;
\hat{\Gamma}^{\varphi}_j,\;
\hat{f}_{\mathbf{r}\text{-}\varphi\text{TT}}
\bigr),
& \text{COS-TT-CHF, from
\eqref{eq:chf-cores-quad}, \eqref{eq:functional-core-chf-quad},
\eqref{eq:COS-TT-CHF-approx-quad}}.
\end{cases}
\end{equation}
We assume throughout that $f$ satisfies the conditions of
Section~\ref{subsec:COS}, so that it admits a Fourier--cosine TT.

\subsection{Expectations of separable functions}
\label{sec:expectation}
We consider the problem of computing
$\E[g(\mathbf{X})] = \int_{\R^d} g(\bx) f(\bx)\,d\bx$
when the target function $g$ is separable:
\begin{equation}
\label{eq:separable-g}
g(\bx) = \prod_{j=1}^{d} g_j(x_j).
\end{equation}

We approximate $\E[g(\mathbf{X})]=\langle g,f\rangle$ by
$\langle g,\hat{f}_{\mathbf{r}\text{-TT}}\rangle$. Applying
Theorem~\ref{thm:FTT-inner-product} to the density FTT and the separable target
\eqref{eq:separable-g} gives
\begin{equation}
\label{eq:expectation-formula}
\langle g, \hat{f}_{\mathbf{r}\text{-TT}}\rangle
= \sum_{\balpha}^{\mathbf r}\prod_{j=1}^{d}
\langle g_j, \hat{\Gamma}_j(\alpha_{j-1};\cdot;\alpha_j)\rangle_{L^2(D_j)} .
\end{equation}
Expanding the functional cores in the cosine basis leaves the univariate
integrals
\[
V^{(j)}_{k_j}
:=\int_{l_j}^{u_j} g_j(x_j)\,\phi^{(j)}_{k_j}(x_j)\,dx_j ,
\]
the Fourier--cosine coefficients of the univariate target $g_j$ in coordinate
$j$. They depend only on $g_j$ and the basis, not on the density or TT cores,
and can be precomputed analytically or by one-dimensional quadrature. The
resulting approximation is
\begin{equation}
\label{eq:expectation-final}
\boxed{
\E[g(\mathbf{X})] \approx
\hat{\E}_{\mathbf{r}\text{-TT}}[g(\mathbf{X})]
:=
\sum_{\balpha}^{\mathbf r}\prod_{j=1}^{d}\sum_{k_j=0}^{K_j-1}
\hat{C}_j(\alpha_{j-1},k_j,\alpha_j)\, V^{(j)}_{k_j}.
}
\end{equation}
\begin{remark}[Error of the contracted expectation]
\label{rem:expectation-error}
$\hat{\E}_{\mathbf{r}\text{-TT}}[g(\mathbf{X})]
= \langle g,\hat{f}_{\mathbf{r}\text{-TT}}\rangle_{L^2(\mathbf D)}$ is computed
from the approximate density on $\mathbf D$, and the contraction itself
introduces no further approximation beyond those of the chain that produced
the train, \eqref{eq:approx-chain} for COS-TT and \eqref{eq:chf-chain} for
COS-TT-CHF. Assume that $gf\in L^1(\R^d)$ and $g\in L^2(\mathbf D)$, and choose
$\mathbf D$ such that
\[
\int_{\R^d\setminus\mathbf D}|g(\bx)|f(\bx)\,d\bx
\leq \tau_{g,\mathbf D}.
\]
The attainable tolerance $\tau_{g,\mathbf D}$ depends on the physical-domain
truncation range and the tail decay of $|g|f$. By Cauchy--Schwarz,
\[
\bigl|\E[g(\mathbf{X})]-\hat{\E}_{\mathbf{r}\text{-TT}}[g(\mathbf{X})]\bigr|
\leq \tau_{g,\mathbf D}+\norm{g}_{L^2(\mathbf{D})}\,
\norm{f-\hat{f}_{\mathbf{r}\text{-TT}}}_{L^2(\mathbf{D})} .
\]
Thus, apart from physical-domain truncation, the density approximation error is
amplified by at most $\norm{g}_{L^2(\mathbf D)}$. This bound applies to either
tensorized COS method.
\end{remark}

\subsection{Expectations of nonseparable functions}
\label{subsec:lincomb}

Although the contraction of Section~\ref{sec:expectation} requires a separable
target, tensorized COS methods can also treat certain nonseparable functions
efficiently. An important class, underlying many problems in quantitative
finance, consists of expectations $\E[G(H(\mathbf X))]$, where the components of
$\mathbf X$ are dependent, $G:\R\to\R$ is univariate, and the scalar aggregate
$H:\R^d\to\R$ is a sum of univariate functions,
\begin{equation}
\label{eq:scalar-aggregate}
H(\bx) = \sum_{j=1}^{d} h_j(x_j),
\end{equation}
with any weights absorbed into the $h_j$. Neither $H$ nor $G\circ H$ is
generally separable, so the target cannot be contracted against the density
train coordinate by coordinate. The exponential of $H$, however, is separable,
\begin{equation}
\label{eq:aggregate-separable}
e^{i\omega H(\bx)}
= \prod_{j=1}^{d}\exp\!\bigl(i\omega\,h_j(x_j)\bigr),
\qquad \omega\in\R .
\end{equation}
This identity supplies the separable target needed to contract the density TT.
Writing $\hat\varphi_H$ for the resulting approximation to the ch.f.\
$\varphi_H$ of $H(\mathbf X)$, and $\hat f_H$ for its one-dimensional COS
density, the complete reduction is
\[
\begin{gathered}
\varphi(\bomega)
\xrightarrow{\text{tensorized COS}}
\hat f_{\mathbf r\text{-TT}}(\bx)
\xrightarrow[\eqref{eq:aggregate-separable}]{\text{TT contraction}}
\hat\varphi_H(\omega)
\xrightarrow{\text{one-dimensional COS}}
\hat f_H(y)
\longrightarrow
\hat\E[G(H(\mathbf X))].
\end{gathered}
\]
The first arrow is the unchanged offline construction of
Section~\ref{sec:method}. The TT contraction is the only multidimensional step
that depends on $H$, while the remaining steps are the ordinary one-dimensional
COS computations recalled in Section~\ref{subsec:COS}.

For fixed $\omega$, define the unnormalized cosine integrals of the factors in
\eqref{eq:aggregate-separable},
\begin{equation}
\label{eq:aggregate-inner}
I_j(k_j,\omega) := \int_{l_j}^{u_j}
\exp\!\bigl(i\omega\,h_j(x)\bigr)
\cos\!\left(\frac{k_j\pi}{u_j-l_j}(x-l_j)\right)dx .
\end{equation}
Because $\phi^{(j)}_{k_j}$ is the corresponding normalized cosine, the
coordinate-wise contraction is $\beta^{(j)}_{k_j}I_j(k_j,\omega)$. Substitution
into \eqref{eq:expectation-final} gives the aggregate ch.f.,
\begin{equation}
\label{eq:aggregate-chf}
\begin{aligned}
\varphi_H(\omega)
&= \int_{\R^d} e^{i\omega H(\bx)}\, f(\bx)\, d\bx
\;\approx\;
\int_{\mathbf{D}} e^{i\omega H(\bx)}\,
\hat{f}_{\mathbf{r}\text{-TT}}(\bx)\, d\bx
\\[0.2em]
&= \sum_{\balpha}^{\mathbf r}\prod_{j=1}^{d}\sum_{k_j=0}^{K_j-1}
\hat{C}_j(\alpha_{j-1},k_j,\alpha_j)\,
\beta^{(j)}_{k_j}\, I_j(k_j,\omega)
\;=:\;\hat\varphi_H(\omega).
\end{aligned}
\end{equation}
The approximation restricts the integral to $\mathbf D$ and replaces $f$ by the
Fourier--cosine TT of \eqref{eq:generic-cores}. Evaluating
\eqref{eq:aggregate-chf} on a scalar COS frequency grid recovers $\hat f_H$ on
a one-dimensional truncation interval $[a_H,b_H]$, chosen as in
Section~\ref{subsec:COS}, and gives
\[
\E[G(H)]\approx\int_{a_H}^{b_H}G(y)\hat f_H(y)\,dy.
\]
No tensor representation of $G$ is required. For fixed $h_j$ and $\mathbf D$,
the inner integrals \eqref{eq:aggregate-inner} contain no density or ch.f.\
evaluations and can be precomputed, while the model enters only through the
coefficient cores. Changing $G$ affects only the final one-dimensional integral,
so the same density train and aggregate density $\hat f_H$ serve many target
functions.

\subsection{Application: semi-analytical formula for basket option pricing}
For the European basket options of Section~\ref{sec:numerics}, financial models
are written for the log-prices $\mathbf X := \ln(\mathbf S)$ directly, while the
basket is a weighted sum of the prices themselves,
\begin{equation}
\label{eq:basket-H}
H(\mathbf X) = \sum_{j=1}^{d} \vartheta_j e^{X_j} = \sum_{j=1}^{d} \vartheta_j S_j ,
\qquad S_j := e^{X_j},
\end{equation}
which is \eqref{eq:scalar-aggregate} with $h_j(x)=\vartheta_j e^{x}$. We write
$\mathcal K$ for the strike, to distinguish it from the expansion-term counts
$\bK$. Define the univariate payoff $G^{\pm}$ and the corresponding basket payoff
by
\begin{equation}
\label{eq:basket-payoff}
G^{\pm}(y;\mathcal K)
:=\bigl(\pm(y-\mathcal K)\bigr)^+,
\qquad
\Lambda^{\pm}(\mathbf X(T);\mathcal K)
:=G^{\pm}\!\left(H(\mathbf X(T));\mathcal K\right),
\end{equation}
where the upper sign denotes a call and the lower a put. The time-zero prices are
$\Pi^{\pm}(0;\mathcal{K}) =
e^{-r_fT}\E^{\mathbb{Q}}[\Lambda^{\pm}(\mathbf{X}(T);\mathcal{K})]$, where $T$
is the maturity and $r_f$ the risk-free rate. Here $G^{\pm}$ plays the
role of $G$, so changing the strike changes only the final one-dimensional
integration.

For this aggregate the inner integrals \eqref{eq:aggregate-inner} read
\begin{equation}
\label{eq:inner-integral-def}
I_j(k_j, \omega) := \int_{l_j}^{u_j} \exp(i\omega\vartheta_j e^{x_j})\,
\cos\!\left(\frac{k_j\pi}{u_j-l_j}(x_j - l_j)\right) dx_j .
\end{equation}
Their integrand contains a cosine factor with frequency $k_j\pi/(u_j-l_j)$, so
a quadrature rule accurate for all $k_j<K_j$ needs a resolution that grows with
$K_j$. Evaluating
\eqref{eq:inner-integral-def} numerically at every frequency of the scalar COS
grid would then dominate the computational cost. The following result removes
that cost by evaluating \eqref{eq:inner-integral-def} in closed form, in terms
of the incomplete gamma integral over a segment,
\begin{equation}
\label{eq:gamma-segment}
\gamma(s;z_a,z_b) := \int_{z_a}^{z_b} t^{s-1}e^{-t}\,dt ,
\end{equation}
along the straight path from $z_a$ to $z_b$. We use this two-endpoint form
rather than the usual lower incomplete gamma function
$\gamma(s,z)=\int_0^z t^{s-1}e^{-t}\,dt$ because the exponents $s$ arising below
are purely imaginary. The integrand then satisfies
$|t^{s-1}|=|t|^{-1}e^{-\operatorname{Im}(s)\arg t}$, which is a constant
multiple of $1/|t|$ along any fixed ray. Thus, the integral from the origin does
not converge and $\gamma(s,z)$ is available only
by analytic continuation, which is moreover singular at $s=0$. By contrast,
\eqref{eq:gamma-segment} converges for every $s$ whenever the path avoids the
origin and the branch cut, and it agrees with $\gamma(s,z_b)-\gamma(s,z_a)$
whenever the latter is defined.

\begin{theorem}[Closed form for the inner integrals]
\label{thm:inner-integrals}
Let $\eta_j = k_j\pi/(u_j-l_j)$, let $z^s := \exp(s\log z)$ denote the principal
branch, and let $\operatorname{Ei}(x) = -\int_{-x}^{\infty} e^{-t}/t\,dt$ be the
exponential integral, taken on its principal branch for the complex arguments
below. Only differences of its values are used, and such a difference is the
convergent integral $\operatorname{Ei}(z_b)-\operatorname{Ei}(z_a)
= \int_{z_a}^{z_b} e^{t}/t\,dt$ along the segment joining the two points.
Then
\eqref{eq:inner-integral-def} evaluates as follows.

\emph{(i) Degenerate phase.} If $\omega\vartheta_j=0$, then
\[
I_j(k_j,\omega)
=
\begin{cases}
0, & k_j\neq0,\\
u_j-l_j, & k_j=0.
\end{cases}
\]

\emph{(ii) Zeroth cosine term.}
If $\omega\vartheta_j\neq0$ and $k_j=0$, then
\[
I_j(0,\omega)
=
\operatorname{Ei}(i\omega\vartheta_j e^{u_j})
-
\operatorname{Ei}(i\omega\vartheta_j e^{l_j}).
\]

\emph{(iii) General case.}
If $\omega\vartheta_j\neq0$ and $k_j\neq0$, then
\begin{equation}
\label{eq:Ipm}
\boxed{
I_j(k_j,\omega)
=
\tfrac{1}{2}I_j^{(+)}(k_j,\omega)
+
\tfrac{1}{2}I_j^{(-)}(k_j,\omega),
\qquad
I_j^{(\pm)}
=
e^{\mp i\eta_j l_j}
(-i\omega\vartheta_j)^{\mp i\eta_j}
\gamma\bigl(
    \pm i\eta_j;
    \zeta_j^{a},
    \zeta_j^{b}
\bigr),
}
\end{equation}
where
\[
\zeta_j^{a}=-i\omega\vartheta_j e^{l_j},
\qquad
\zeta_j^{b}=-i\omega\vartheta_j e^{u_j}.
\]
Both endpoints lie on the ray of direction
$-i\operatorname{sgn}(\omega\vartheta_j)$, so the segment joining them
meets neither the origin nor the principal cut $(-\infty,0]$.
\end{theorem}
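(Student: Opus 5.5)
The proof uses the substitution $t=e^{x_j}$, which turns the cosine into a power of $t$, followed by a linear complex change of variable that produces the segment form \eqref{eq:gamma-segment}. Write $c:=\omega\vartheta_j$.

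\emph{Case (i).} If $c=0$ the exponential factor equals $1$. What remains is $\int_{l_j}^{u_j}\cos(\eta_j(x-l_j))\,dx$. This equals $\sin(k_j\pi)/\eta_j=0$ for $k_j\neq0$ and $u_j-l_j$ for $k_j=0$.

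\emph{Case (ii).} With $k_j=0$ and $c\neq0$, set $t=e^{x}$, so $dx=dt/t$. This gives $\int_{e^{l_j}}^{e^{u_j}}e^{ict}\,dt/t$. The substitution $\tau=ict$ maps the real segment onto the segment from $ic e^{l_j}$ to $ic e^{u_j}$, and $d\tau/\tau=dt/t$. The integral is therefore $\int e^{\tau}/\tau\,d\tau$ along that segment. By the convention stated in the theorem, this equals the difference of $\operatorname{Ei}$ values.

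\emph{Case (iii).} Write $\cos(\eta_j(x-l_j))=\tfrac12\sum_{\pm}e^{\mp i\eta_j l_j}e^{\pm i\eta_j x}$ and substitute $t=e^{x}$. Since $e^{\pm i\eta_j x}=t^{\pm i\eta_j}$ for real $t>0$, each half becomes $e^{\mp i\eta_j l_j}\int_{e^{l_j}}^{e^{u_j}}t^{\pm i\eta_j-1}e^{ict}\,dt$.

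Next set $z=-ict$, so that $e^{ict}=e^{-z}$ and $t=z/(-ic)$. The endpoints become $\zeta_j^a,\zeta_j^b$, and $dt/t=dz/z$.

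The main obstacle is the power identity $t^{s}=z^{s}(-ic)^{-s}$ with $s=\pm i\eta_j$. It requires $\log z=\log t+\log(-ic)$ on the principal branch. This holds because $t>0$ is real and $\arg(-ic)=\mp\pi/2\in(-\pi,\pi)$, so $\arg z=\arg(-ic)$ and no $2\pi$ correction arises. With this identity each half equals $e^{\mp i\eta_j l_j}(-ic)^{\mp i\eta_j}\int_{\zeta^a}^{\zeta^b}z^{\pm i\eta_j-1}e^{-z}\,dz$, which is \eqref{eq:Ipm}.

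It remains to verify the geometric claim. Both endpoints are positive multiples of $-i\operatorname{sgn}(c)$, so the segment lies on the open imaginary half-axis. This half-axis avoids the origin and $(-\infty,0]$. Hence the principal-branch integrand is analytic along the path, and the change of variables along the straight path is legitimate. The same geometric argument (with ray $i\operatorname{sgn}(c)$) justifies case (ii).
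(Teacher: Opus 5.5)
Your proposal is correct and follows the paper's proof essentially step for step: substitute $t=e^{x}$, split the cosine into two exponentials, and apply the linear map $z=-ict$ onto the segment from $\zeta_j^a$ to $\zeta_j^b$. Your explicit check that $\log z=\log t+\log(-ic)$ holds on the principal branch (since $t>0$ and $\arg(-ic)=-\operatorname{sgn}(c)\,\pi/2$) supplies a detail the paper covers only with the phrase ``the powers taken on the principal branch''.
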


\begin{proof}
Omit the subscript $j$ and write
$I=\int_a^b e^{iqe^x}\cos\bigl(\eta(x-a)\bigr)\,dx$, with
$q:=\omega\vartheta$ and $\eta:=k\pi/(b-a)$.

\emph{(i)} If $q=0$, then $I=\int_a^b\cos\bigl(\eta(x-a)\bigr)\,dx$, which
equals $\eta^{-1}\sin(k\pi)=0$ for $k\neq0$ and $b-a$ for $k=0$.

\emph{(ii)} If $q\neq0$ and $\eta=0$, the cosine is unity, and the substitution
$u=e^x$ gives
$I=\int_{e^a}^{e^b}e^{iqu}/u\,du=\operatorname{Ei}(iqe^x)\big|_a^b$.

\emph{(iii)} Let $q\neq0$ and $\eta\neq0$. Writing
$\cos\bigl(\eta(x-a)\bigr)=\tfrac12\sum_{\pm}e^{\pm i\eta(x-a)}$ gives
$I=\tfrac12(I_++I_-)$, where the substitution $u=e^x$ yields
$I_\pm=e^{\mp i\eta a}\int_{e^a}^{e^b}u^{\pm i\eta-1}e^{iqu}\,du$.
For $s=\pm i\eta$, the further substitution $t=-iqu$, which maps
$[e^a,e^b]$ onto the segment from $\zeta^a$ to $\zeta^b$, gives
$\int_{e^a}^{e^b}u^{s-1}e^{iqu}\,du=(-iq)^{-s}\gamma\bigl(s;\zeta^a,\zeta^b\bigr)$,
the powers taken on the principal branch. Both sides are finite because the
left-hand integrand is continuous on $[e^a,e^b]$ and the right-hand path
avoids the origin. Substituting $s=\pm i\eta$ into this identity yields
\eqref{eq:Ipm}.
\end{proof}

\begin{remark}[Numerical evaluation]
\label{rem:gamma-eval}
The quantities in \eqref{eq:Ipm} are complex-valued and enter the
construction of the aggregate ch.f. The exponent $\pm i\eta_j$ is purely
imaginary, so evaluation requires a library that supports complex parameters.
Use a routine that takes both endpoints and so returns \eqref{eq:gamma-segment}
directly, such as \texttt{mpmath.gammainc(s, a, b)}; forming the difference of
two separately computed values of the continued $\gamma(s,z)$ is neither
necessary nor numerically preferable. The principal branch must be used
consistently in $(-i\omega\vartheta_j)^{\mp i\eta_j}$ and $\gamma$. As in
\eqref{eq:Ipm}, it is
preferable to evaluate $I_j^{(+)}$ and $I_j^{(-)}$ separately and then sum them.
\end{remark}

\section{Numerical results}
\label{sec:numerics}
\label{subsec:basket-numerics}

Each sensitivity study below varies the stated controls while
holding the remainder fixed, and reports two quantities: the joint density,
which is what the tensorized COS methods construct, and a European basket call,
which exercises the expectation calculation of Section~\ref{sec:basket}. The two
weight the physical-domain truncation range differently, as
Section~\ref{subsec:num-K-rank} shows.



\subsection{Setup}
\label{subsec:num-setup}

\paragraph{Models.}
We take basket option pricing as the application example and assess both methods
under GBM and VG, which exhibit rapid and algebraic ch.f.\ decay, respectively.
We work throughout in log-asset coordinates,
$\mathbf{X}=(\log S_1,\dots,\log S_d)$. Under GBM, the ch.f.\ of $\mathbf{X}$ is
Gaussian. Both models are multidimensional L\'evy processes, so each ch.f.\
follows from the L\'evy--Khintchine representation. See
\citet{cont2004financial} for the general form and
\citet{luciano2006multivariate} for the multivariate VG construction based on a
shared gamma subordinator.

\paragraph{Model parameters.} We follow Table~4 of
\citet{arenstein2026costtchf}, allowing direct comparison with that independent
replication of our methods. In both models, $S_0 = 100$, the basket weights are
equal, $T = 1$, and volatility is
heterogeneous across dimensions, $\sigma_j = \sigma_{\min} + (\sigma_{\max} -
\sigma_{\min})x_j + \varepsilon\sin(\pi x_j)$ with $x_j = (j-1)/(d-1)$. For GBM
$(\sigma_{\min},\sigma_{\max},\varepsilon) = (0.18, 0.30, 0.015)$ with
$\rho_{ij} = 0.7^{|i-j|}$ and $r_f = 0.02$. For VG, the profile is
$(0.20, 0.40, 0.05)$ with
$\rho_{ij} = 0.35^{|i-j|}$, $r_f = 0.03$, $\theta_j = -0.30$ and $\nu = 0.10$.
The basket call has strike $\mathcal{K} = 100$. Heterogeneous volatilities avoid
the permutation invariance induced by identical marginals and equal weights,
which would yield an unrepresentative coefficient-tensor rank.

All experiments set the three controls we choose isotropically, $K_j = K$,
$\Omega_j = \Omega$ and $M_j = M$ for every $j$, and we write the scalars $K$,
$\Omega$ and $M$ for these common values below, reserving $\bK$,
$\bm{\Omega}$ and $\mathbf{M}$ for the general anisotropic case. The
physical-domain truncation range is not among them: Remark~\ref{rem:domain-rule}
applies the cumulant rule coordinatewise, so $|D_j|$ inherits the heterogeneity
of $\sigma_j$ and is a vector even when everything we choose is a scalar. At
$d = 4$ the VG ranges span a factor $1.82$ between the narrowest and the widest
coordinate, and the GBM ranges at $d = 10$ a factor $1.67$. This matters
wherever a scalar control has to satisfy a per-coordinate condition, since it is
then the worst coordinate that binds, and the worst coordinate is not the same
one for the two frequency conditions; Section~\ref{subsec:num-window} makes that
explicit.

The frequency-domain truncation-range study in
Section~\ref{subsec:num-window} uses, for its GBM half, a homogeneous set of
parameters, $\sigma_j = 0.4$, $\rho_{ij} = 0.5$ and $r_f = 0$, for which
$|D_j| = 2\lambda_1\sigma\sqrt{T}$ is common to every coordinate and the
compatibility bound $\kappa_j$ of \eqref{eq:Omega-compatibility} is available in
closed form. Because that study concerns the truncation range rather than the
rank, homogeneity does not bias its conclusions. Its VG half uses the
heterogeneous parameters above.

\paragraph{Reference values.}
We evaluate the density at $10^3$ points drawn from the model and the basket
call at $\mathcal{K}=100$.\footnote{Based on \citet{fang2009novel}, the
convergence rate does not depend on the choice of strike price, which affects
the error level only slightly. We therefore consider this simplification
sufficient for our testing purposes.} In the GBM case, we work in log-asset
coordinates, so
the joint density is Gaussian and the reference density is the exact
multivariate Normal in every dimension.

The VG density and both basket prices lack closed forms, and Monte Carlo cannot
attain the required accuracy. The measured errors reach $10^{-13}$, whereas a
randomized-Sobol estimator with a geometric-basket control variate has a
half-width between $10^{-5}$ and $5\times10^{-4}$ over the reported runs. To
study convergence, we therefore use the method's output at $K = 512$ and
$\lambda_1 = 16$ as the reference value, well above the experimental maxima of
$192$ and $10$. Four neighbouring refined settings agree with this value to
within $10^{-14}$ in mean relative density. We therefore treat it as exact to a
resolution of $10^{-13}$ and mark that level on panels approached by the VG
curves.

The Sobol estimator independently validates the price. Across the $401$ runs of
Sections~\ref{subsec:num-K-rank}--\ref{subsec:num-cost}, spanning $15$
independent market--dimension comparisons, the reference value agrees with the
estimator to within $1.19$ half-widths. We plot its uncertainty
band but do not use it as the reference because its sampling error would impose
an artificial floor on every curve. The sole exception is
Table~\ref{tab:ladder-accuracy}, whose ladder reaches dimensions at which a
converged reference of our own is unaffordable, so its price errors are
measured against the estimator instead.
Neither reference check would detect an
error common to all four model--method combinations, particularly a rank error.
We therefore verify rank convergence separately in
Section~\ref{subsec:num-K-rank}.

\paragraph{Error measures and plotting conventions.}
The following conventions apply unless a figure caption states otherwise.
The density error is measured as the mean relative error over the $10^3$ points
above and the price error as the relative error of the basket call at
$\mathcal{K}=100$, both computed from the same coefficient trains. Where the two
methods are compared, filled markers denote COS-TT and open markers COS-TT-CHF.
Where shown, shading marks the region in which a point resolves the reference
rather than the method. On density panels, it denotes the $10^{-13}$ resolution
of the VG reference value. On price panels, it denotes the quasi-Monte Carlo
half-width, which measures the estimator's uncertainty rather than the error of
the reference value. Figure~\ref{fig:omega} instead shades
$\Omega<\kappa$, as stated in its caption.
All tests reported in this paper were conducted on a laptop with an AMD Ryzen 7
PRO 8840HS processor (8 cores). All reported timings are single-threaded.

Each curve decreases before reaching a plateau. This plateau indicates that the
varied error component has fallen below another source, not that the error is
intrinsically insensitive to the control. Further refinement has no effect until
the dominant source is reduced.

\subsection{Frequency-domain truncation and discretization}
\label{subsec:num-window}

Proposition~\ref{prop:Omega-selection} makes the frequency-domain truncation
range a function of the cosine expansion rather than a free parameter. This
section tests that prediction, and then the resolution guide that follows from
it.

By a \emph{configuration} we mean a pair $(K,\lambda_1)$. It fixes the
expansion-term count $K$ and, through the rule of
Remark~\ref{rem:domain-rule}, the physical-domain widths $|D_j|$. It therefore
also fixes the compatibility bound of \eqref{eq:Omega-compatibility}. Within each
configuration we vary the truncation range along the ladder
\begin{equation}
\label{eq:Omega-ladder}
  \Omega/\kappa \in \{0.5,\,0.8,\,1,\,1.2,\,1.5\},
  \qquad
  \kappa := \max_j \kappa_j = \frac{(K-1)\pi}{\min_j |D_j|},
\end{equation}
and, independently of $\Omega$, the quadrature point count, at four values per
window. Both controls are scalars while $\mathbf{D}$ is not, and each binds on a
different coordinate: $\Omega$ must clear $\max_j \kappa_j$, which by
\eqref{eq:kappa-definition} belongs to the narrowest box, and $M$ must reach
$\max_j \chi_j = \Omega \max_j |D_j| / 2$ of \eqref{eq:M-centered-guide}, which
belongs to the widest. Both are lower bounds, so sizing each control for its own
worst coordinate over-resolves the others at a cost in time rather than
accuracy, which is what keeps them scalar. For the homogeneous GBM market the
two coincide and $M \in \{64,128,256,512\}$. The VG boxes differ by a factor
$1.82$, so a fixed set of counts would meet its six configurations at six
different resolutions; we therefore ladder the VG count in the normalized
resolution $M/\chi$ of \eqref{eq:M-over-chi}, requiring
$M/\chi \geq \{0.32,\,0.64,\,1.27,\,2.55\}$ with $M$ rounded up to a power of
two, so that every window is measured at two counts at or above $M_j = \chi_j$.
Each configuration thus contributes twenty runs.
Using the same absolute truncation ranges for all
configurations would produce arbitrary values of $\Omega/\kappa$ and obscure the
alignment predicted by \eqref{eq:Omega-compatibility}. We use nine GBM
configurations at $d = 15$, with $K \in \{32,48,64\}$ and
$\lambda_1 \in \{6,8,10\}$, and six VG configurations at $d = 4$, with
$K \in \{64,128\}$ and the same three $\lambda_1$. The VG term counts are larger
because VG requires more terms for comparable accuracy.

\begin{figure}[htbp]
  \centering
  \includegraphics[width=0.90\textwidth]{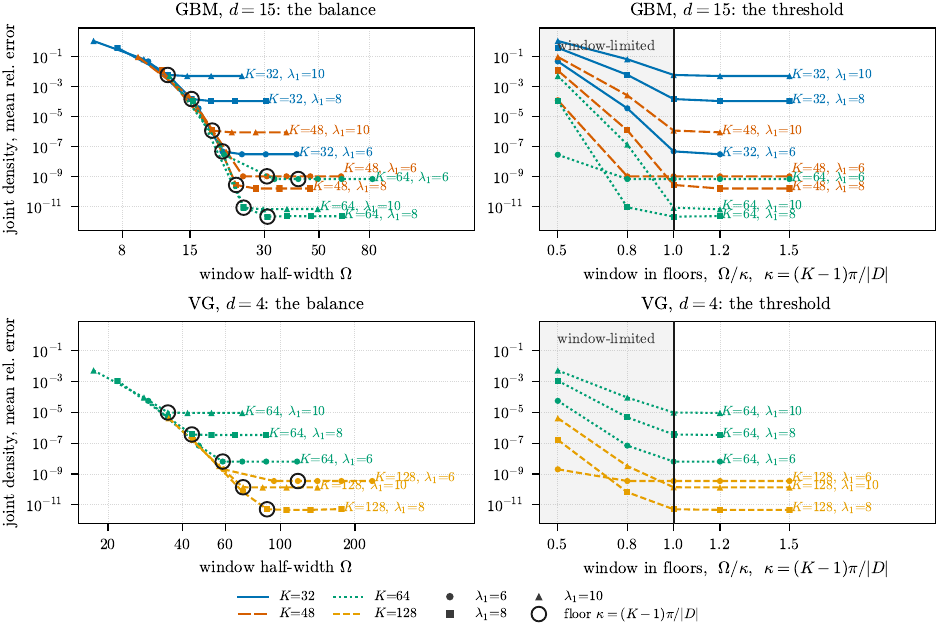}
  \caption{Relative error of the joint density against the frequency-domain
    truncation range.
    \emph{Left:} against the half-width $\Omega$ itself, minimized over
    quadrature point counts to exclude quadrature error. Open rings mark the
    configuration-specific compatibility bounds $\kappa$. \emph{Right:} the same
    runs against $\Omega/\kappa$, with
    shading for $\Omega < \kappa$. Top row GBM at $d = 15$, bottom VG at $d = 4$.
    Colour and dash pattern encode $K$, markers encode $\lambda_1$, and both are
    repeated at each curve's end.}
  \label{fig:omega}
\end{figure}

Each of the 15 curves in Figure~\ref{fig:omega} follows one configuration along
the ladder \eqref{eq:Omega-ladder}. In the left panel, plotted against $\Omega$
itself, the curves reach their plateaus at 14 different values of $\Omega$
spanning $12.2$ to $88.2$. In the right panel, after each curve is rescaled by
its own $\kappa$, the transitions align on the bound: every one of the 15
configurations is within a factor $1.8$ of its own plateau at $\Omega = \kappa$
and within $1.1$ at $\Omega = 1.2\,\kappa$, whereas at $\Omega = 0.8\,\kappa$
they stand up to $2.0\times10^{4}$ above it and at $\Omega = 0.5\,\kappa$ up to
$7.3\times10^{8}$. So $\kappa$ sets where a curve plateaus. The level at which it
plateaus is set instead by $K$ and $\lambda_1$, and spans many orders of
magnitude in both models.

Three of the 15 are already at their plateau at $\Omega = 0.8\,\kappa$, all
with $\lambda_1 = 6$, the narrowest range in the sweep. There the
physical-domain truncation error, which $\Omega$ does not enter, already
exceeds what the window discards, so the bound is not what those curves run
into.

This behaviour confirms \eqref{eq:Omega-compatibility}: the bound locates the
truncation range beyond which further enlargement no longer reduces the error,
replacing configuration-specific tuning with a single rule. Because $\kappa$
grows with $K$, increasing the expansion-term count at fixed $\Omega$ eventually
violates the compatibility condition, after which additional terms provide no
benefit. The truncation range and expansion-term count must therefore increase
together.

We now turn to the second control, the quadrature point count. Below the
compatibility bound, $\varepsilon_{\bm{\Omega}}$ dominates, because additional
quadrature points cannot recover information discarded by truncation. The error
is therefore insensitive to the point count there. Above the bound the point
count does matter, and \eqref{eq:M-centered-guide} states what it must be
measured against. The truncation range $\mathbf{D}$ of Remark~\ref{rem:domain-rule}
is centred on the marginal mean, so the dimensionless frequency
\eqref{eq:chi-def} of the kernel is $\chi_j=\Omega_j|D_j|/2$ and the guide reads
$M_j\gtrsim\chi_j$. The natural variable is therefore not $M_j$ but the
normalized resolution
\begin{equation}
\label{eq:M-over-chi}
  \frac{M_j}{\chi_j} \;=\; \frac{2M_j}{\Omega_j|D_j|},
\end{equation}
at whose unit value Gauss--Legendre places two points per oscillation at the
window centre, the position of poorest resolution in
Figure~\ref{fig:chf-oscillations}.

Two filters isolate $\varepsilon_{\mathrm{quad}}$ from the other error sources.
We drop the runs with $\Omega<\kappa$, where $\varepsilon_{\bm{\Omega}}$
dominates, and the runs whose error is within ten times the configuration's own
plateau, where $\varepsilon_{\bK}$ does. The second filter removes the three GBM
configurations with $K = 32$ outright. For GBM, the Gaussian
residual ch.f.\ is nonoscillatory, so the guide applies directly. For skewed VG,
the residual ch.f.\ contributes additional phase variation, and the same ratio
provides only a baseline normalization.

Figure~\ref{fig:nodes} confirms that $M_j/\chi_j$, rather than $M_j$ alone,
governs the transition. Call a curve flat once it comes within a factor of two
of its own error floor. Across the six GBM configurations, flatness sets in
between $M_j/\chi_j=0.86$ and $1.16$. The whole range lies within $16\%$ of the
Gauss--Legendre Nyquist point $M_j=\chi_j$, so the guide is calibrated for GBM.

\begin{figure}[!htbp]
  \centering
  \includegraphics[width=0.90\textwidth]{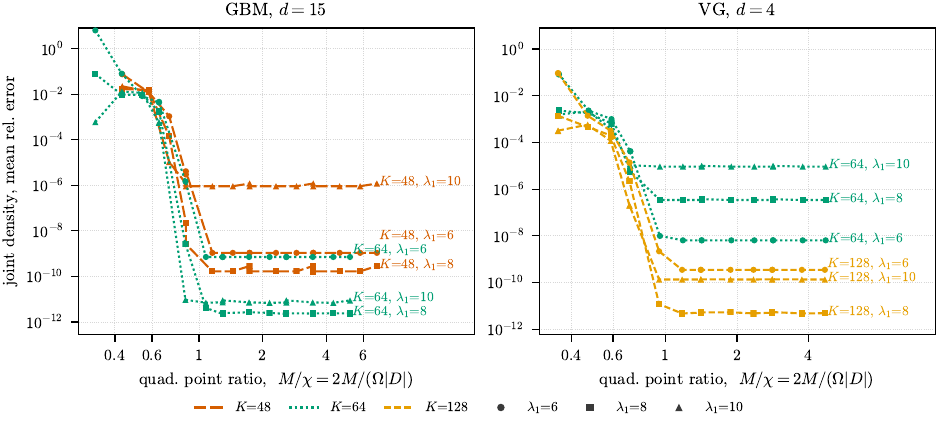}
  \caption{Relative error of the joint density against the normalized
    quadrature resolution \eqref{eq:M-over-chi}, for the same two studies and
    with the same conventions as
    Figure~\ref{fig:omega}. Runs with $\Omega < \kappa$ are
    excluded, as are the three GBM configurations with $K = 32$, whose cosine
    truncation error exceeds the quadrature error at every count. Six
    configurations remain per panel.}
  \label{fig:nodes}
\end{figure}

The six VG configurations flatten over $M_j/\chi_j\in[0.71,1.17]$, a range
that brackets $M_j=\chi_j$ as the GBM range does, within $30\%$ rather than
$16\%$. Twenty-four values of the ratio are reached by more than one
$(M,\Omega)$ pair, because the VG point counts are laddered in resolution
rather than in $M$, so that different windows meet at the same ratio; at each of
them the pairs agree to within $10\%$, and to $2\%$ at the median. This supports
the normalization.

One caveat remains on the low end of the VG range. The VG floors in
Figure~\ref{fig:omega} lie above the GBM floors because the nonsmooth VG density
yields algebraic rather than exponential Fourier--cosine convergence and
requires many more terms for a given accuracy \citep{fang2009novel}. The lowest
of the six, $M_j/\chi_j=0.71$ at $K=64$, $\lambda_1=10$, belongs to the
configuration whose floor is $9.3\times10^{-6}$: there $\varepsilon_{\bK}$
dominates and the curve plateaus before the quadrature binds, so that value is a
lower bound on the required resolution rather than a measurement of it.

The predicted change of regime is visible in both models. Read from right to
left, a few curves in Figure~\ref{fig:nodes} stop decreasing and rise again as
the point count is reduced. Every such rise starts below $M_j=0.6\,\chi_j$, near
the theoretical threshold $(e/4)\chi_j\approx0.68\,\chi_j$ of
Section~\ref{subsec:chf-difficulty}. Below it the grid no longer resolves the
kernel, and aliasing replaces ordinary quadrature error. This pre-asymptotic
regime does not support a convergence-rate estimate. Allowing a margin above the
largest transition observed in either model, $M_j/\chi_j=1.17$, gives the
conservative recommendation $M_j\geq1.3\,\chi_j$.

\subsection{The physical-domain truncation range, the expansion-term count and the rank}
\label{subsec:num-K-rank}

Figure~\ref{fig:k-l1} shows the fast convergence of the Fourier--cosine
expansion and the effect of varying $(K,\lambda_1)$, for both
methods and models. The picture is the classical COS trade-off
\citep{fang2009novel}: $K$ and $\lambda_1$ have to be balanced. A wide
truncation range pushes the relevant frequencies higher, so more terms are
needed to resolve them; a narrow range cuts off too much density mass, and that
truncation error dominates instead. The agreement between COS-TT-CHF and COS-TT
shows that, once the frequency-domain parameters satisfy our guide, COS-TT-CHF
attains the same high accuracy as COS-TT. Together,
Figures~\ref{fig:chf-oscillations}--\ref{fig:k-l1} show that the guide reduces
the remaining error control to the familiar COS choices of $\bK$ and
$\lambda_1$.

\begin{figure}[!htbp]
  \centering
  \includegraphics[width=0.90\textwidth]{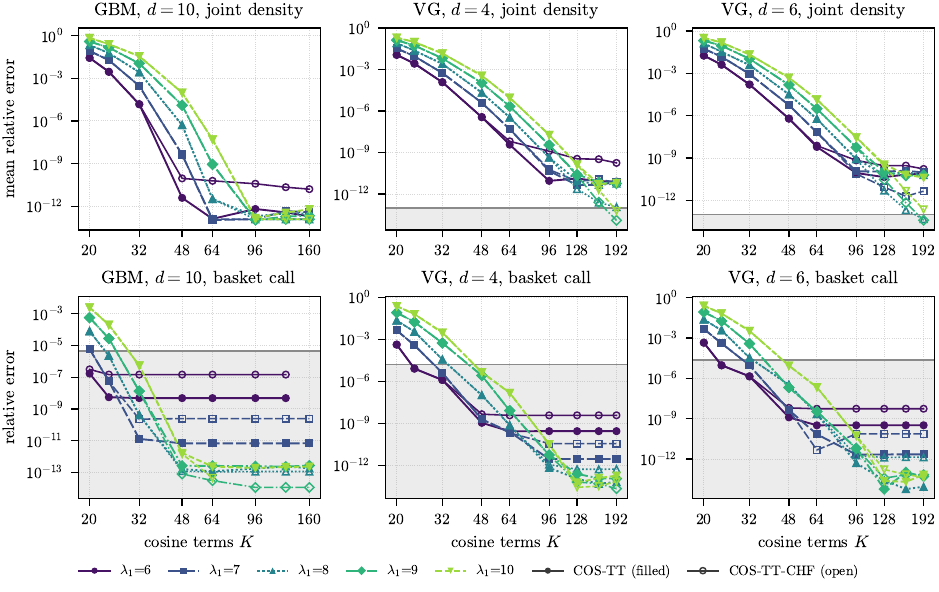}
  \caption{Sensitivity to the expansion-term count $K$ and the width multiplier
    $\lambda_1$ of Remark~\ref{rem:domain-rule}, for GBM at $d = 10$ and VG at
    $d = 4$ and $6$, at $\Omega = 80$ and $\Omega = 100$, respectively. The top
    row shows the relative error of the joint density, the bottom row that of
    the European basket call price at $\mathcal{K} = 100$. The
    horizontal axis is $K$. Colour and dash pattern encode
    $\lambda_1 \in \{6,\ldots,10\}$.}
  \label{fig:k-l1}
\end{figure}

\begin{figure}[htbp]
  \centering
  \includegraphics[width=0.90\textwidth]{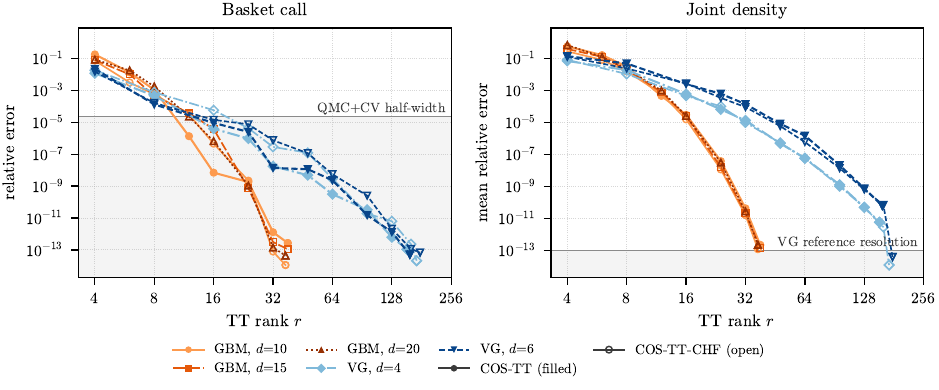}
  \caption{Relative errors against realized TT rank, for the basket call (left)
    and the joint density (right), for both methods at one configuration per
    model: $(K,\lambda_1) = (160,9)$ at $\Omega = 80$ for GBM, $(192,9)$ at
    $\Omega = 100$ for VG. The rank is selected by DMRG-greedy rather than
    prescribed, with its cap raised from $4$ to $64$ for GBM and $4$ to
    $224$ for VG. Table~\ref{tab:rank-converged} records where each series
    stops.}
  \label{fig:rank}
\end{figure}

Theorem~\ref{thm:train-conversion} and Assumption~\ref{ass:training} do not
determine the attainable rank, which must therefore be assessed numerically.
Figure~\ref{fig:rank} compares the methods at one common configuration per model,
using the narrowest range for which their density errors agree within the shared
truncation floor. Below $\lambda_1=9$, the error ratio is irregular because of
the width of $\mathbf{D}$ rather than the rank. The ch.f.-side formula omits the
truncated tail, whereas the coefficient-side formula folds it into
$\mathbf{D}$.

The required rank depends more strongly on the model than on dimension. GBM
converges at ranks $37$--$38$ for $d = 10$, $15$, and $20$, whereas VG already
requires ranks $153$--$178$ at $d = 4$ and $6$. The high VG ranks are consistent
with the shared gamma subordinator, which does not factorize across dimensions. The
two methods converge at comparable ranks: COS-TT-CHF requires approximately
$12\%$ higher ranks for VG and marginally lower ranks for GBM.
The analysis allows a rank penalty for COS-TT-CHF because the core-wise map
\eqref{eq:chf-cores-quad} is neither square nor orthogonal, so the best
rank-$\mathbf{r}$ approximation of $\varphi$ on the frequency grid
need not map to the best rank-$\mathbf{r}$ approximation of the coefficient
tensor. These measurements neither confirm nor exclude such a penalty, which
would be detectable only at high ranks. No curve in Figure~\ref{fig:rank} is
limited by the rank cap. Each approaches its configuration's truncation-error
floor.

\begin{table}[htbp]
  \centering
  \caption{Rank at which DMRG-greedy stops on its stopping tolerance of
    $10^{-13}$, with the relative density and price errors attained there, at
    the common configuration of Figure~\ref{fig:rank}. COS-TT was not run at
    GBM $d = 15$ and $20$.}
  \label{tab:rank-converged}
  \small
  \begin{tabular}{@{}l rll rll@{}}
    \toprule
    & \multicolumn{3}{c}{COS-TT-CHF} & \multicolumn{3}{c}{COS-TT} \\
    \cmidrule(lr){2-4}\cmidrule(lr){5-7}
    Case & Rank & Density & Price & Rank & Density & Price \\
    \midrule
    GBM $d=10$ & 37  & $1.2\times10^{-13}$ & $1.1\times10^{-14}$
               & 38  & $2.2\times10^{-13}$ & $2.7\times10^{-13}$ \\
    GBM $d=15$ & 38  & $1.4\times10^{-13}$ & $1.1\times10^{-13}$
               &     &                     &                     \\
    GBM $d=20$ & 37  & $2.2\times10^{-13}$ & $4.4\times10^{-14}$
               &     &                     &                     \\
    VG $d=4$   & 171 & $1.2\times10^{-14}$ & $2.1\times10^{-14}$
               & 153 & $5.7\times10^{-12}$ & $1.1\times10^{-13}$ \\
    VG $d=6$   & 178 & $3.9\times10^{-14}$ & $6.8\times10^{-14}$
               & 158 & $6.2\times10^{-11}$ & $5.0\times10^{-14}$ \\
    \bottomrule
  \end{tabular}
\end{table}


\subsection{Cost and dimensional scaling}
\label{subsec:num-cost}

\begin{figure}[htbp]
  \centering
  \includegraphics[width=0.94\textwidth]{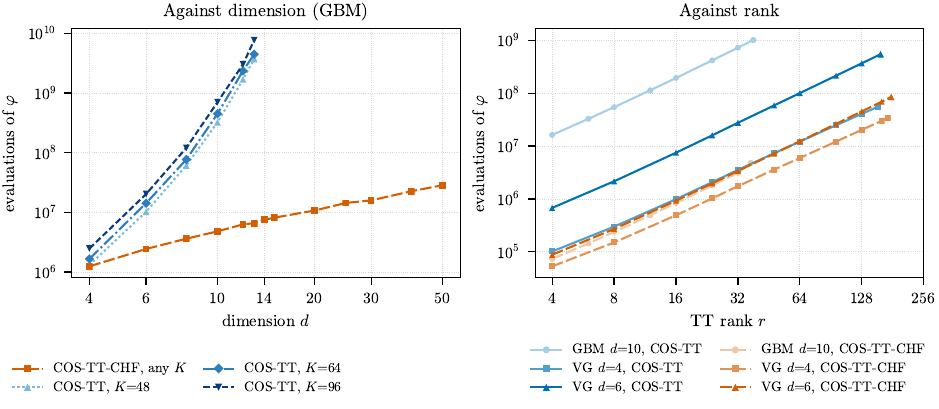}
  \caption{Number of evaluations of $\varphi$ requested by the
    compression. \emph{Left:} dimension scaling for the GBM runs of
    Figure~\ref{fig:dimension}, with $\Omega = 80$, $\lambda_1 = 7$, and a rank
    cap of $64$. The three
    COS-TT lines are $K = 48$, $64$ and $96$, and the single COS-TT-CHF line is
    common to all three.
    \emph{Right:} scaling with realized TT rank for both models and methods in
    Figure~\ref{fig:rank}.}
  \label{fig:samples}
\end{figure}

Because cross interpolation accesses a tensor only through entry evaluations,
the requested number of evaluations of $\varphi$ provides an
implementation-independent measure of offline cost. Each COS-TT-CHF entry
requires one such evaluation. By contrast, each COS-TT entry of $\calA$ sums
over the $2^{d-1}$ sign vectors of the mirrored domain through
\eqref{eq:multi-COS-coeff}. Figure~\ref{fig:samples} shows the resulting counts.
At fixed controls, the COS-TT-CHF count grows nearly linearly with $d$, by a
factor of $23$ from $d = 4$ to $d = 50$. The COS-TT count grows by more than
three orders of magnitude from $d = 4$ to $d = 13$. At fixed dimension, both
counts grow at the same rate with realized rank but differ by a
dimension-dependent factor. These observations agree with the analysis: both
methods request the same $\mathcal{O}(dnr^2)$ number of entries, as in
Section~\ref{subsec:TT-algorithms}, and only the cost per entry differs.

Figure~\ref{fig:time} separates wall time into offline compression and online
evaluation. Both methods use the same DMRG-greedy implementation under identical
conditions and differ only in the tensor being compressed. The offline panel
reproduces the sample-count trends. COS-TT-CHF grows algebraically, with fitted
exponents of $2.34$ for GBM and $2.24$ for VG, both below third order in $d$.
COS-TT instead grows by factors of $2.4$ per dimension for GBM and $2.5$ for
VG. This exponential growth comes from \eqref{eq:multi-COS-coeff}, not the
decomposition algorithm. The exponents above one for COS-TT-CHF do not
contradict the linear tensor complexity of
Section~\ref{subsec:TT-algorithms}, which counts requested entries. Each
COS-TT-CHF entry requires a ch.f.\ evaluation involving the quadratic form
$\bomega^\top\boldsymbol{\Sigma}\bomega$ and therefore costs
$\mathcal{O}(d^2)$. Consequently, the gap between algebraic and exponential
growth widens with dimension. 
COS-TT-CHF continues to $d = 150$ for GBM and
$d = 100$ for VG, requiring $745$\,s and $1361$\,s, respectively.\footnote{These
are the dimensions at which we chose to stop, not limits of the method: the
offline cost grows algebraically rather than exponentially in dimension, and the
errors in Table~\ref{tab:ladder-accuracy} remain small at the largest dimensions
tested. We fixed the range by practical relevance rather than by feasibility,
working to a single-threaded offline budget of about $30$ minutes per run. A
production risk system can absorb an offline stage of that size, whereas daily
recalculation would require a faster one.}

\begin{figure}[htbp]
  \centering
  \includegraphics[width=0.90\textwidth]{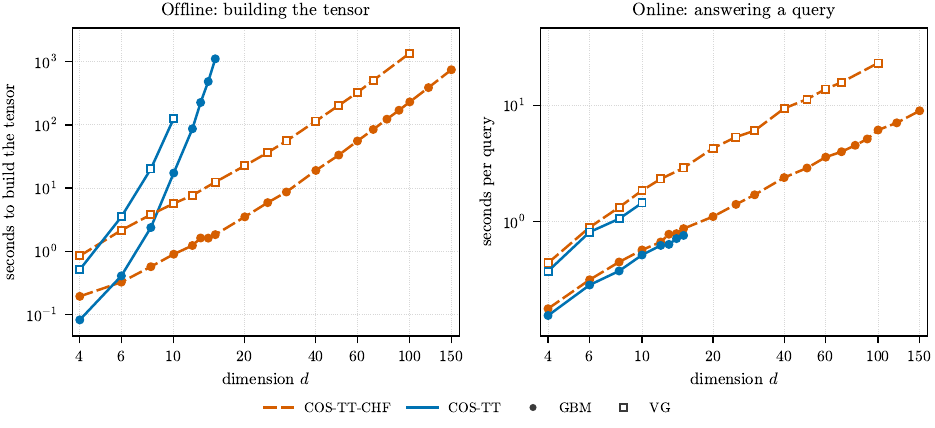}
  \caption{Wall time against dimension, split into the offline compression and
    the online evaluation. An online evaluation comprises the core-wise map, the
    basket call at five strikes and the joint density at $10^3$ points. Timings
    are single-threaded with the cache disabled and show the minimum of three
    repeats. The controls target a relative joint-density error of order
    $10^{-4}$ rather than $10^{-13}$. GBM uses $K = 64$,
    $\lambda_1 = 9$, $\Omega = 44$, $M = 195$, and rank cap $24$. VG uses $K = 96$,
    $\lambda_1 = 9$, $\Omega = 39$, $M = 296$, rank cap $48$. Colour and dash
    identify the method, and markers identify the model. Each curve is annotated
    with its fitted scaling. Both panels share a vertical scale.}
  \label{fig:time}
\end{figure}

Table~\ref{tab:ladder-accuracy} reports the corresponding relative errors. Two
references are used. The density error is reported for GBM only, against the
exact Normal density of the log-asset prices; the VG density has no closed
form at these dimensions. The price error is reported for both models,
against a randomized-Sobol quasi-Monte Carlo reference.

The GBM density error stays at or below the targeted order of $10^{-4}$ at
every dimension. The price errors stay below $1.5$ times the Sobol
half-width.

\begin{table}[!htbp]
  \centering
  \caption{Offline times and relative errors at selected dimensions of
    Figure~\ref{fig:time}. The density error is measured against the exact
    Normal density, available for GBM only. The price errors of both models are
    measured against a randomized-Sobol reference, whose relative half-width is
    listed alongside.}
  \label{tab:ladder-accuracy}
  \small
  \begin{tabular}{@{}r rlll rll@{}}
    \toprule
    & \multicolumn{4}{c}{GBM} & \multicolumn{3}{c}{VG} \\
    \cmidrule(lr){2-5}\cmidrule(lr){6-8}
    $d$ & Offline & Density & Price & Sobol hw
        & Offline & Price & Sobol hw \\
    \midrule
    4   & $0.2$\,s  & $2.6\times10^{-8}$ & $7.5\times10^{-7}$ & $1.4\times10^{-6}$
        & $0.8$\,s  & $2.2\times10^{-6}$ & $1.5\times10^{-5}$ \\
    20  & $3.5$\,s  & $2.1\times10^{-6}$ & $5.8\times10^{-6}$ & $7.0\times10^{-6}$
        & $23$\,s   & $8.7\times10^{-6}$ & $5.8\times10^{-5}$ \\
    70  & $85$\,s   & $1.5\times10^{-4}$ & $1.7\times10^{-5}$ & $5.7\times10^{-5}$
        & $505$\,s  & $3.7\times10^{-5}$ & $4.2\times10^{-5}$ \\
    100 & $231$\,s  & $1.3\times10^{-5}$ & $2.0\times10^{-6}$ & $6.2\times10^{-5}$
        & $1361$\,s & $1.0\times10^{-5}$ & $5.8\times10^{-5}$ \\
    150 & $745$\,s  & $2.0\times10^{-5}$ & $2.5\times10^{-5}$ & $8.9\times10^{-5}$
        &           &                    & \\
    \bottomrule
  \end{tabular}
\end{table}

The online curves coincide because both methods supply the same tensorized
representation of the Fourier--cosine expansion to the same online stage. At
GBM $d = 10$, one online evaluation costs $0.57$\,s for COS-TT-CHF and
$0.52$\,s for COS-TT. Online time grows approximately linearly, as $d^{1.04}$
for GBM and $d^{1.19}$ for VG, reaching $8.96$\,s at GBM $d = 150$. This growth
is modest relative to the four orders of magnitude spanned by the offline
times. COS-TT-CHF provides a further structural saving because neither $\bK$
nor $\lambda_1$ enters the ch.f.\ compression. These parameters enter only the
core-wise map applied to the compressed tensor. Evaluating $N$ combinations of
$(K,\lambda_1)$ therefore requires one compression and $N$ maps for COS-TT-CHF,
but $N$ compressions for COS-TT. For the $45$ combinations in
Figure~\ref{fig:k-l1} at VG $d = 6$, the respective counts are
$8.5\times10^{7}$ and $1.04\times10^{10}$, a factor of $123$.

In Figure~\ref{fig:dimension} all controls are held fixed and only the number
of dimensions varies up to $d = 50$, where the exact multivariate Normal
reference remains available. The density error grows smoothly and algebraically.
Least-squares fits of log error against log dimension give exponents of $1.6$,
$2.5$, and $1.2$ for $K = 48$, $64$, and $96$, with $R^2$ between $0.92$ and
$0.97$, confirming the algebraic error growth stated in
Section~\ref{sec:intro}. Over the same range, the full coefficient tensor
$\calA$ grows by a factor of $K^{46}$.
The nearly flat $K = 96$ curve shows that the growth at
lower $K$ comes from series truncation rather than loss of low-rank structure.
The basket-price error is nearly constant across all dimensions and term counts,
remaining at the $\lambda_1 = 7$ truncation floor identified in
Figure~\ref{fig:k-l1}. At these controls, the price is therefore limited by
$\mathbf{D}$ rather than $\bK$ or $d$. The entire panel lies below the
quasi-Monte Carlo half-width, so this comparison uses the converged reference
value. Neither method reaches the rank cap of $64$, and the selected ranks remain
between $37$ and $40$ for dimensions $4$--$50$.

\begin{figure}[!htbp]
  \centering
  \includegraphics[width=0.95\textwidth]{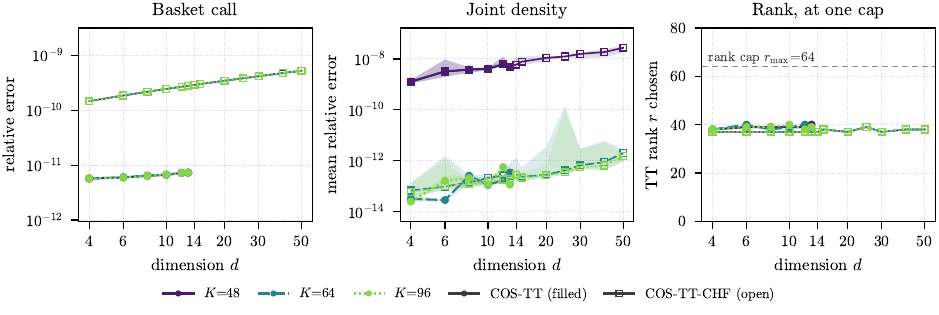}
  \caption{Relative error against dimension for GBM with every control fixed at
    $\Omega = 80$, $\lambda_1 = 7$ and a rank cap of $64$, with $K = 48$, $64$
    and $96$ encoded by colour and dash pattern. \emph{Left:} the basket call.
    \emph{Centre:} the joint density, with bands giving the spread over four
    independent test samples. The bands widen where the curves reach the
    $10^{-13}$ floor, since the mean relative error is then set by a few tail
    draws and depends on the sample. \emph{Right:} the rank selected under
    the common cap, drawn as a dashed rule. COS-TT ends at $d = 13$, where a
    single compression exceeds the time budget.}
  \label{fig:dimension}
\end{figure}

\paragraph{Parameter selection in practice.}
Together, these results give a practical order for selecting the controls.
Choose $\mathbf{D}$, equivalently $\lambda_1$, from the price or mass defect
rather than the density error alone, and choose $\bK$ from coefficient decay.
Set the truncation range by \eqref{eq:Omega-compatibility}, using
$\Omega_j=1.2\kappa_j$ as a safe default or $1.5\kappa_j$ conservatively, and set
$M_j\geq1.3\,\chi_j$. Finally, give DMRG-greedy a sufficiently large rank cap
and let it select the rank. A binding cap indicates that it should be increased.
The resulting representation is reusable across strikes and payoffs. COS-TT-CHF also reuses a
single ch.f.\ compression across the entire $(K,\lambda_1)$ grid, making
parameter studies substantially less expensive than with COS-TT.

\section{Conclusions and future work}
\label{sec:conclusion}

Our principal contribution is COS-TT-CHF, a tensorized COS method that,
together with its parameter-selection rules, breaks the curse of
dimensionality for the problems studied here.

For fixed numbers of cosine terms and bounded TT ranks, and at fixed cost
per ch.f.\ evaluation, both the offline TT-core construction and the
online density-recovery and expectation computations scale linearly with
dimension, while the density error grows only algebraically. Measured
runtimes grow somewhat faster because a single ch.f.\ evaluation costs
$\mathcal{O}(d^2)$ for the models considered here. The experiments reach
$d=150$ for GBM and $d=100$ for VG with good accuracy and within a
$30$-minute offline computation budget, despite the different smoothness
properties of the two densities.

COS-TT provides a more direct complementary method. Its online cost also
scales linearly with dimension, but its offline construction remains
exponential because each COS coefficient requires $2^{d-1}$ ch.f.
evaluations. Its simpler implementation and standard COS error control
nevertheless make it useful in moderate dimensions, with computations
reported up to dimension $26$ \citep{schaap2025costt}.

Both methods share the expectation-calculation approach of
Section~\ref{sec:basket}. For a target function of a scalar aggregate,
contracting the compressed density against a separable exponential
reduces the $d$-dimensional expectation to a one-dimensional COS
calculation. For weighted sums of asset prices, the contraction
integrals are available in closed form
(Theorem~\ref{thm:inner-integrals}). Because the payoff is never
tensorized, a single compressed density can be reused across many payoffs
and strikes.

\paragraph{Future work.}

These methods, and COS-TT-CHF in particular, suggest applications to more
challenging modelling problems. Candidates include counterparty credit
risk, where exposure profiles involve expectations of functions of
weighted sums of dependent risk factors, and multi-asset option pricing
beyond standard baskets, including contingent and exotic features.

More broadly, COS-TT-CHF opens a route to revisiting Fourier-based methods
for high-dimensional problems that were previously out of reach.

Sharper error control and more efficient frequency-domain discretization
remain important directions. Proposition~\ref{prop:Omega-selection}
converts an assumed algebraic ch.f.\ tail into a frequency-domain
truncation range, whereas our quadrature results provide resolution
conditions rather than direct accuracy bounds. Deriving model-specific
tail constants and oscillation scales would turn both into explicit
bounds on $\varepsilon_{\bm{\Omega}}$ and
$\varepsilon_{\mathrm{quad}}$, and would allow $\bm{\Omega}$ and
$\mathbf{M}$ to be optimized jointly. In parallel, quadrature rules
tailored to the analytic oscillatory factors of the kernel may reduce
$\mathbf{M}$ while preserving the core-wise TT construction, as
discussed in Remark~\ref{rem:oscillatory-quadrature}.

\appendix

\section{Error analysis of COS-TT}
\label{sec:error}

The three arrows of the approximation chain \eqref{eq:approx-chain}, established in
Section~\ref{subsec:COS-TT}, give three additive error components:
\begin{equation}
   \label{eq:error-decomp}
   \norm{f - \hat{f}_{\mathbf{r}\text{-CTT}}}_{L^2}
   \leq \underbrace{\norm{f - \tilde{f}}_{L^2}}_{\varepsilon_{\bK}}
   + \underbrace{\norm{\tilde{f} - \hat{f}}_{L^2}}_{\varepsilon_{\mathrm{COS}}}
   + \underbrace{\norm{\hat{f} - \hat{f}_{\mathbf{r}\text{-CTT}}}_{L^2}}_{\varepsilon_{\CTT}}.
\end{equation}
The first two components belong to the underlying COS method. Bounds for both in
the multidimensional setting, and the resulting trade-off between the
physical-domain truncation range and the expansion-term count, are given in
\citet[Thm.~3.7 and Prop.~A.1]{junike2025characteristic}. We do not repeat them here.

\subsection{TT decomposition error
\texorpdfstring{$\varepsilon_{\CTT}$}{eps\_CTT}}
\label{subsec:eps-TT}

By \eqref{eq:error-decomp}, $\varepsilon_{\CTT}$ is the $L^2(\mathbf{D})$ norm
of the density error $\hat{f} - \hat{f}_{\mathbf{r}\text{-CTT}}$.
It includes both rank truncation and the numerical error of the decomposition
algorithm.

The cores are computed by the DMRG-greedy scheme in
Section~\ref{subsec:TT-algorithms}. A complete convergence theory for this
rank-adaptive scheme is beyond the scope of this paper. We therefore do not
bound $\varepsilon_{\CTT}$ in terms of the method parameters. Instead, we assume
that the decomposition attains a prescribed coefficient-tensor accuracy and
derive the COS-specific conversion to density error. The other two components
are bounded explicitly, so the total accuracy is conditional only on the
decomposition tolerance.

By \eqref{eq:fhat}, the retained COS density is
$\hat{f}(\bx) = \sum_{\bk}^{\bK}\mathcal{A}_{\bk}\,\Phi_{\bk}(\bx)$. The
decomposition stage approximates $\mathcal{A}$ on $\IK$ by the rank-$\mathbf{r}$
reconstruction $\hat{\mathcal{A}}$ of \eqref{eq:A-TT}, assembled from computed
cores $\hat{A}_j \in \R^{r_{j-1}\times K_j\times r_j}$. Writing
$\Delta\mathcal{A}_{\bk} := \mathcal{A}_{\bk} - \hat{\mathcal{A}}_{\bk}$ for the
coefficient error, it follows that
\begin{equation}
  \label{eq:eps-train-series}
  \hat{f}(\bx) - \hat{f}_{\mathbf{r}\text{-CTT}}(\bx)
  = \sum_{\bk}^{\bK}\Delta\mathcal{A}_{\bk}\,\Phi_{\bk}(\bx).
\end{equation}

\begin{theorem}[Coefficient-to-density conversion]
  \label{thm:train-conversion}
  The decomposition error of the reconstructed density equals the Frobenius-norm
  error of the approximated coefficient tensor,
  \begin{equation}
    \label{eq:rank-error-identity}
    \varepsilon_{\CTT}
    := \norm{\hat{f} - \hat{f}_{\mathbf{r}\text{-CTT}}}_{L^2(\mathbf{D})}
    = \norm{\mathcal{A} - \hat{\mathcal{A}}}_F .
  \end{equation}
\end{theorem}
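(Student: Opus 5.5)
The identity is Parseval's relation for the finite orthonormal system $\{\Phi_{\bk}\}_{\bk\in\IK}$, applied to the error expansion \eqref{eq:eps-train-series}. The plan is to start from \eqref{eq:eps-train-series}, which writes $\hat f-\hat f_{\mathbf r\text{-CTT}}$ as the finite sum $\sum_{\bk}^{\bK}\Delta\mathcal A_{\bk}\Phi_{\bk}$. This uses only that $\hat f$ and $\hat f_{\mathbf r\text{-CTT}}$ are the syntheses of $\mathcal A$ and $\hat{\mathcal A}$ in the same basis. For the latter, \eqref{eq:COS-TT-discrete} shows that regrouping the TT product into the functional cores \eqref{eq:functional-core} is an algebraic identity, so $\hat f_{\mathbf r\text{-CTT}}=\sum_{\bk}^{\bK}\hat{\mathcal A}_{\bk}\Phi_{\bk}$ exactly.

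Next I would expand the squared $L^2(\mathbf D)$ norm of this finite sum as a double sum $\sum_{\bk,\bk'}\Delta\mathcal A_{\bk}\overline{\Delta\mathcal A_{\bk'}}\langle\Phi_{\bk},\Phi_{\bk'}\rangle_{L^2(\mathbf D)}$. The sums are finite, so no convergence issue arises. Orthonormality of the product basis then reduces this to $\sum_{\bk}^{\bK}|\Delta\mathcal A_{\bk}|^2=\norm{\mathcal A-\hat{\mathcal A}}_F^2$.

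The orthonormality itself I would verify by factorizing via Fubini, or by Theorem~\ref{thm:FTT-inner-product} with a rank-one train: $\langle\Phi_{\bk},\Phi_{\bk'}\rangle=\prod_j\langle\phi^{(j)}_{k_j},\phi^{(j)}_{k'_j}\rangle_{L^2(D_j)}$. Each one-dimensional factor equals $\delta_{k_jk'_j}$, by the elementary cosine integrals on $[l_j,u_j]$ with the normalization $\beta^{(j)}_{k}$ of \eqref{eq:product-basis}; the distinct value of $\beta^{(j)}_0$ is what makes the $k_j=0$ term have unit norm too. Taking square roots gives \eqref{eq:rank-error-identity}.

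The only point needing care, rather than a real obstacle, is the $k=0$ normalization and confirming that the regrouped functional form equals the coefficient synthesis. Both are settled by the definitions already in place.
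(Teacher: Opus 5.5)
Your proposal is correct and follows essentially the same route as the paper's proof: both apply Parseval's identity for the orthonormal product cosine basis to the finite error expansion \eqref{eq:eps-train-series}, obtaining $\varepsilon_{\CTT}^2=\sum_{\bk}^{\bK}|\Delta\mathcal{A}_{\bk}|^2$. Your extra checks of the $k_j=0$ normalization and of the regrouping identity \eqref{eq:COS-TT-discrete} spell out what the paper takes directly from the definitions.
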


\begin{proof}
  By orthonormality of $\{\Phi_{\bk}\}$, Parseval's identity applied to
  \eqref{eq:eps-train-series} gives
  $\varepsilon_{\CTT}^2
  = \norm{\hat{f} - \hat{f}_{\mathbf{r}\text{-CTT}}}_{L^2(\mathbf{D})}^2
  = \sum_{\bk}^{\bK}|\Delta\mathcal{A}_{\bk}|^2
  = \norm{\Delta\mathcal{A}}_F^2$.
  The modulus is written because Section~\ref{sec:error-chf} reuses the
  identity for coefficient tensors that need not be real.
\end{proof}

The proof uses only orthonormality of the product cosine basis, so the same
equality holds for any pair of coefficient tensors expanded in that basis.

\begin{assumption}[Decomposition regime]
  \label{ass:training}
  For a tolerance $\delta_{\TT}>0$, the decomposition stage returns cores whose
  reconstruction \eqref{eq:A-TT} satisfies
  $\norm{\mathcal{A}-\hat{\mathcal{A}}}_F \leq \delta_{\TT}$.
\end{assumption}

This assumption concerns the decomposition stage rather than the COS
construction: the required tolerance $\delta_{\TT}$ is not expressed in terms
of $K$, $\mathbf{D}$, or $d$. The resulting accuracy is therefore conditional on
the decomposition stage but explicit in the two COS parameters.

Assumption~\ref{ass:training} is not verified directly in the reported
high-dimensional experiments because the full tensors cannot be formed.
Instead, the rank-convergence study in Section~\ref{subsec:num-K-rank} assesses
its practical effect through the resulting density and expectation errors.
Whenever the full tensor is available, the Frobenius error can be evaluated
directly.

Available theory also bounds this quantity in the intended regime. For cross
interpolation, the error is the best rank-$\mathbf{r}$ error multiplied by an
amplification factor that grows polynomially rather than exponentially in $d$
\citep{Savostyanov2014,Osinsky2019,qin2022error}. The remaining constants depend on
the quality of the interpolation sets, which greedy pivoting does not control. We
take these results as they stand and do not specialize them to the TT-DMRG-greedy
variant used here.

\section*{Acknowledgements}
\addcontentsline{toc}{section}{Acknowledgements}

We thank FF Quant Advisory B.V.\ and Dr. Xiaoyu Shen from the company for supporting a series of 
proof-of-concept studies on COS-tensor methods, carried out as MSc theses
(2021--2025), which laid the foundation for this work.

\section*{Statements and Declarations}
\addcontentsline{toc}{section}{Statements and Declarations}

\subsection*{Competing Interests}

Fang Fang is employed by FF Quant Advisory B.V.\ and by the Delft Institute 
of Applied Mathematics, Delft University of Technology.
 As stated in the Acknowledgements, FF Quant Advisory B.V.\ supported the MSc
proof-of-concept studies that preceded and informed this work. Auke Schaap is
affiliated with the Delft Institute of Applied Mathematics, Delft University of
Technology. The company had no role in the design of the methods, the analysis,
or the decision to publish.
The authors have no other financial or non-financial interests, direct or
indirect, that are related to the work submitted for publication. The views
expressed in this paper are those of the authors and do not necessarily
reflect the views of their affiliated institutions.

\subsection*{Funding}

The authors did not receive support from any organization for the submitted
work.

\subsection*{Data and Code Availability}

No datasets were generated or analysed in this study. All numerical
experiments use synthetic model parameters, reported in full in
Section~\ref{sec:numerics}. The research code implementing COS-TT and
COS-TT-CHF is not publicly available at present. It is available from the
corresponding author on reasonable request, and we intend to release it on
publication.

\subsection*{Declaration of AI Use}

The methodology, the theoretical results, and their implementation are our own.
No large language model (LLM) is an author of this paper. We used LLM-based
assistants (Claude Code and ChatGPT-5) to debug and refactor
code, to automate tests, to cross-check derivations we had already obtained by
hand, and to check references. They were also used for copy editing, that is,
for improving readability, grammar and style, but not for generative editorial
work or autonomous content creation. We derived and verified every result
ourselves and reviewed all such output. The authors take full responsibility for
the content of this paper and confirm that the final text reflects their own work.

\bibliographystyle{abbrvnat}
\bibliography{literature,costt_extra}

@article{duffie2002affine,
  author  = {Duffie, Darrell and Filipovi\'c, Damir and Schachermayer, Walter},
  title   = {Affine processes and applications in finance},
  journal = {The Annals of Applied Probability},
  volume  = {13},
  number  = {3},
  pages   = {984--1053},
  year    = {2003},
  doi     = {10.1214/aoap/1060202833}
}

@article{ruijter2012two,
  author    = {Ruijter, Marjon J. and Oosterlee, Cornelis W.},
  title     = {Two-Dimensional {Fourier} Cosine Series Expansion Method for Pricing Financial Options},
  journal   = {SIAM Journal on Scientific Computing},
  volume    = {34},
  number    = {5},
  pages     = {B642--B671},
  year      = {2012},
  publisher = {SIAM},
  doi       = {10.1137/120862053}
}

@misc{kastoryano2022highly,
  author        = {Kastoryano, Michael and Pancotti, Nicola},
  title         = {A Highly Efficient Tensor Network Algorithm for Multi-Asset {Fourier} Options Pricing},
  year          = {2022},
  eprint        = {2203.02804},
  archivePrefix = {arXiv},
  primaryClass  = {quant-ph},
  url           = {https://arxiv.org/abs/2203.02804}
}

@article{white1992density,
  author  = {White, Steven R.},
  title   = {Density Matrix Formulation for Quantum Renormalization Groups},
  journal = {Physical Review Letters},
  volume  = {69},
  number  = {19},
  pages   = {2863--2866},
  year    = {1992},
  doi     = {10.1103/PhysRevLett.69.2863}
}

@article{schollwock2011density,
  author  = {Schollw\"{o}ck, Ulrich},
  title   = {The Density-Matrix Renormalization Group in the Age of Matrix Product States},
  journal = {Annals of Physics},
  volume  = {326},
  number  = {1},
  pages   = {96--192},
  year    = {2011},
  doi     = {10.1016/j.aop.2010.09.012}
}

@book{hackbusch2019tensor,
  author    = {Hackbusch, Wolfgang},
  title     = {Tensor Spaces and Numerical Tensor Calculus},
  series    = {Springer Series in Computational Mathematics},
  volume    = {56},
  publisher = {Springer},
  address   = {Cham},
  edition   = {2nd},
  year      = {2019},
  doi       = {10.1007/978-3-030-35554-8},
  isbn      = {9783030355548}
}

@inproceedings{qin2022error,
  author    = {Qin, Zhen and Lidiak, Alexander and Gong, Zhexuan and Tang, Gongguo
               and Wakin, Michael B. and Zhu, Zhihui},
  title     = {Error analysis of tensor-train cross approximation},
  booktitle = {Advances in Neural Information Processing Systems 35 (NeurIPS 2022)},
  pages     = {14236--14249},
  year      = {2022},
  doi       = {10.52202/068431-1035},
  eprint    = {2207.04327},
  archivePrefix = {arXiv},
  primaryClass  = {cs.LG}
}

@article{Osinsky2019,
  author  = {Osinsky, A. I.},
  title   = {Tensor trains approximation estimates in the {C}hebyshev norm},
  journal = {Computational Mathematics and Mathematical Physics},
  volume  = {59},
  number  = {2},
  pages   = {201--206},
  year    = {2019},
  doi     = {10.1134/S096554251902012X}
}

@misc{arenstein2026costtchf,
  author        = {Arenstein, Lucas and Kastoryano, Michael},
  title         = {{COS--TT--CHF}: A Tensor-Train Characteristic-Function {COS} Method
                   for Multi-Asset Option Pricing},
  year          = {2026},
  eprint        = {2608.17636},
  archivePrefix = {arXiv},
  primaryClass  = {q-fin.CP},
  url           = {https://arxiv.org/abs/2608.17636}
}

@article{MastFang2026,
  author  = {Mast, Gijs and Fang, Fang and Shen, Xiaoyu and Brands, Marnix},
  title   = {Dimension-Reduced {Fourier--Cosine} Expansion via Canonical Polyadic Decomposition with Application to Counterparty Credit Exposures},
  journal = {IMA Journal of Numerical Analysis},
  year    = {2026},
  note    = {Submitted}
}

@article{trefethen2008gauss,
  author  = {Trefethen, Lloyd N.},
  title   = {Is {G}auss Quadrature Better than {C}lenshaw--{C}urtis?},
  journal = {SIAM Review},
  volume  = {50},
  number  = {1},
  pages   = {67--87},
  year    = {2008},
  doi     = {10.1137/060659831}
}

@book{deano2018computing,
  author    = {Dea{\~n}o, Alfredo and Huybrechs, Daan and Iserles, Arieh},
  title     = {Computing Highly Oscillatory Integrals},
  publisher = {SIAM},
  address   = {Philadelphia},
  year      = {2018},
  doi       = {10.1137/1.9781611975123},
  isbn      = {9781611975116}
}

@book{atkinson1989introduction,
  author    = {Atkinson, Kendall E.},
  title     = {An Introduction to Numerical Analysis},
  edition   = {2},
  publisher = {John Wiley \& Sons},
  address   = {New York},
  year      = {1989},
  isbn      = {9780471624899}
}

@article{luciano2006multivariate,
  author  = {Luciano, Elisa and Schoutens, Wim},
  title   = {A Multivariate Jump-Driven Financial Asset Model},
  journal = {Quantitative Finance},
  volume  = {6},
  number  = {5},
  pages   = {385--402},
  year    = {2006},
  doi     = {10.1080/14697680600806275}
}

@book{cont2004financial,
  author    = {Cont, Rama and Tankov, Peter},
  title     = {Financial Modelling with Jump Processes},
  series    = {Chapman \& Hall/CRC Financial Mathematics Series},
  publisher = {Chapman \& Hall/CRC},
  address   = {Boca Raton, FL},
  year      = {2004},
  isbn      = {9781584884132}
}

@article{bigoni2016spectral,
  title = {Spectral Tensor-Train Decomposition},
  author = {Bigoni, Daniele and Engsig-Karup, Allan P. and Marzouk, Youssef M.},
  journal = {SIAM Journal on Scientific Computing},
  volume = {38},
  number = {4},
  pages = {A2405--A2439},
  year = {2016},
  doi = {10.1137/15M1036919},
  publisher = {SIAM}
}

@article{gorodetsky2019continuous,
  title = {A continuous analogue of the tensor-train decomposition},
  author = {Gorodetsky, Alex and Karaman, Sertac and Marzouk, Youssef},
  journal = {Computer Methods in Applied Mechanics and Engineering},
  volume = {347},
  pages = {59--84},
  year = {2019},
  doi = {10.1016/j.cma.2018.12.015},
  issn = {0045-7825},
  publisher = {Elsevier}
}

@article{fang2009novel,
  title={A novel pricing method for {E}uropean options based on {F}ourier-cosine series expansions},
  author={Fang, Fang and Oosterlee, Cornelis W},
  journal={SIAM Journal on Scientific Computing},
  volume={31},
  number={2},
  pages={826--848},
  year={2008},
  publisher={SIAM},
  doi = {10.1137/080718061}
}

@article{ruijter2015application,
  title={On the application of spectral filters in a {F}ourier option pricing technique},
  author={Ruijter, Marjon and Versteegh, Mark and Oosterlee, Cornelis W},
  journal={Journal of Computational Finance},
  volume={19},
  number={1},
  pages={75--106},
  year={2015},
  doi={10.21314/JCF.2015.306}
}

@article{glau2020low,
	title={Low-rank tensor approximation for {C}hebyshev interpolation in parametric option pricing},
	author={Glau, Kathrin and Kressner, Daniel and Statti, Francesco},
	journal={SIAM Journal on Financial Mathematics},
	volume={11},
	number={3},
	pages={897--927},
	year={2020},
	publisher={SIAM},
	doi = {10.1137/19M1244172}
}

@article{xiang2012convergence,
	author={Xiang, Shuhuang and Bornemann, Folkmar},
	title={On the Convergence Rates of {G}auss and {C}lenshaw--{C}urtis Quadrature for Functions of Limited Regularity},
	journal={SIAM Journal on Numerical Analysis},
	volume={50},
	number={5},
	pages={2581--2587},
	year={2012},
	doi={10.1137/120869845}
}

@article{de2008tensor,
	author={de Silva, Vin and Lim, Lek-Heng},
	title={Tensor rank and the ill-posedness of the best low-rank approximation problem},
	journal={SIAM Journal on Matrix Analysis and Applications},
	volume={30},
	number={3},
	pages={1084--1127},
	year={2008},
	publisher={SIAM},
	doi={10.1137/06066518X}
}

@article{DolgovSavostyanov2019,
  author       = {Dolgov, S. and Savostyanov, D.},
  title        = {Parallel cross interpolation for high-precision calculation of high-dimensional integrals},
  journal      = {Computer Physics Communications},
  volume       = {246},
  pages        = {106869},
  year         = {2020},
  doi          = {10.1016/j.cpc.2019.106869},
  url          = {https://doi.org/10.1016/j.cpc.2019.106869}
}

@article{Savostyanov2014,
  author       = {Savostyanov, D. V.},
  title        = {Quasioptimality of maximum-volume cross interpolation of tensors},
  journal      = {Linear Algebra and its Applications},
  volume       = {458},
  pages        = {217--244},
  year         = {2014},
  doi          = {10.1016/j.laa.2014.06.006},
  url          = {https://doi.org/10.1016/j.laa.2014.06.006}
}

@article{Kargas2021,
    title = {{Supervised Learning and Canonical Decomposition of Multivariate Functions}},
    year = {2021},
    journal = {IEEE Transactions on Signal Processing},
    author = {Kargas, Nikos and Sidiropoulos, Nicholas D.},
    pages = {1097--1107},
    volume = {69},
    url = {https://ieeexplore.ieee.org/document/9340610/},
    doi = {10.1109/TSP.2021.3055000},
    issn = {1053-587X}
}

@article{amiridi2022lowrank,
    title = {{Low-Rank Characteristic Tensor Density Estimation Part I: Foundations}},
    author = {Amiridi, Magda and Kargas, Nikos and Sidiropoulos, Nicholas D.},
    journal = {IEEE Transactions on Signal Processing},
    volume = {70},
    pages = {2654--2668},
    year = {2022},
    doi = {10.1109/TSP.2022.3175608},
    issn = {1053-587X}
}

@article{sakurai2025learning,
  title={Learning Parameter Dependence for {F}ourier-Based Option Pricing with Tensor Trains},
  author={Sakurai, Rihito and Takahashi, Haruto and Miyamoto, Koichi},
  journal={Mathematics},
  volume={13},
  number={11},
  pages={1828},
  year={2025},
  doi={10.3390/math13111828},
  publisher={MDPI}
}

@article{oseledets2011tensor,
  title={Tensor-train decomposition},
  author={Oseledets, Ivan V},
  journal={SIAM Journal on Scientific Computing},
  volume={33},
  number={5},
  pages={2295--2317},
  year={2011},
  publisher={SIAM},
  doi = {10.1137/090752286}
}

@article{OseledetsTyrtyshnikov2010,
  author  = {Oseledets, Ivan V. and Tyrtyshnikov, Eugene E.},
  title   = {TT-cross approximation for multidimensional arrays},
  journal = {Linear Algebra and its Applications},
  volume  = {432},
  number  = {1},
  pages   = {70--88},
  year    = {2010},
  doi     = {10.1016/j.laa.2009.07.024}
}

@article{hackbusch2009new,
  title={A New Scheme for the Tensor Representation},
  author={Hackbusch, Wolfgang and K{\"u}hn, Stefan},
  journal={Journal of Fourier Analysis and Applications},
  volume={15},
  number={5},
  pages={706--722},
  year={2009},
  doi={10.1007/s00041-009-9094-9}
}

@article{junike2025characteristic,
	author={Junike, Gero and Stier, Hauke},
	title={From Characteristic Functions to Multivariate Distribution Functions and {E}uropean Option Prices by the (Damped) {COS} Method},
	journal={SIAM Journal on Numerical Analysis},
	volume={63},
	number={6},
	pages={2421--2453},
	year={2025},
	publisher={SIAM},
	doi={10.1137/24M1666240}
}

@mastersthesis{schaap2025costt,
  author = {Schaap, Auke},
  title  = {Dimension-Reduced {Fourier} Cosine Series Expansion Based on {Tensor Train} Decomposition and Its Application in Finance},
  school = {Delft University of Technology},
  type   = {{MSc} thesis},
  year   = {2025},
  address = {Delft, the Netherlands},
  url    = {https://diamhomes.ewi.tudelft.nl/~kvuik/numanal/schaap_afst.pdf}
}

\end{document}